\newcommand*{\mailto}[1]{\href{mailto:#1}{\nolinkurl{#1}}}
\newcommand{\CT}{\mathcal{T}}
\newcommand{\RR}{\mathbb{R}}
\newcommand{\QQ}{\mathbb{Q}}
\newcommand{\CS}{\mathcal{S}}
\newcommand{\CO}{\mathcal{O}}
\newcommand{\CP}{\mathcal{P}}
\newcommand{\BF}{\mathbb{F}}
\newcommand{\CF}{\mathcal{F}}
\newcommand{\SF}{\mathscr{F}}
\newcommand{\CM}{\mathcal{M}}
\newcommand{\NN}{\mathbb{N}}
\newcommand{\PP}{\mathbb{P}}
\newcommand{\WienH}{\mathcal{H}}
\newcommand{\MA}{\mathfrak{A}}
\newcommand{\INT}{[0,T]}
\newcommand{\EE}{\mathbb{E}}
\newcommand{\CB}{\mathcal{B}}
\newcommand{\CL}{\mathcal{L}}
\newcommand{\lk}{\left}
\newcommand{\rk}{\right}
\newcommand{\la}{\langle}
\newcommand{\ra}{\rangle}
\newcommand{\CH}{\mathcal{H}}
\newcommand{\Sn}{\{S_n\}}
\newcommand{\DD}{\mathbb{D}}
\newcommand{\bW}{\mathbb{W}}
\newcommand{\CK}{\mathcal{K}}
\newcommand{\Law}{\operatorname{Law}}
\newcommand{\CI}{\mathcal{I}}
\newcommand{\CMM}{\mathcal{M}}
\newcommand{\abs}[1]{\left\lvert#1\right\rvert}
\newcommand{\norm}[1]{\left \lVert#1 \right\rVert}
\newcommand\Leb{\operatorname{Leb}}
\newcommand{\distr}{\mu}
\newcommand{\vast}{{V'}}
\newcommand{\hast}{{H'}}
\newcommand{\kk}{k}
\newcommand{\DEQS}{\begin{eqnarray*}}
	\newcommand{\EEQS}{\end{eqnarray*}}
\newcommand{\DEQSZ}{\begin{eqnarray}}
	\newcommand{\EEQSZ}{\end{eqnarray}}
\newcommand\del[1]{}
\newcommandx{\intodofahim}[2][1=]{\todo[linecolor=red,backgroundcolor=red!25,bordercolor=red,#1]{#2}}
\definecolor{darkgreen}{rgb}{0.5,0.25,0}
\definecolor{darkblue}{rgb}{0,0,1}
\definecolor{answerblue}{rgb}{0,0,0.75}
\definecolor{DarkGreen}{rgb}{0.1,0.6,0.2}   
\definecolor{Yellow}{rgb}{0.7,0.7,0.7}   
\newtheorem{theorem}{Theorem}[section]
\newtheorem{lemma}[theorem]{Lemma}
\newtheorem{definition}[theorem]{Definition}
\newtheorem{example}[theorem]{Example}
\newtheorem{remark}[theorem]{Remark}
\newtheorem{hypo}[theorem]{Assumption}
\numberwithin{equation}{section}
\begin{document}

\title[A general Yamada-Watanabe result]
{Yamada-Watanabe uniqueness results for SPDEs driven 
by Wiener and pure jump processes}

\author[Fahim]{Kistosil Fahim}
\address[Kistosil Fahim]
{Department of Mathematics,
Institut Teknologi Sepuluh Nopember,
Kampus ITS Sukolilo,
Surabaya 60111, Indonesia}
\email{\mailto{kfahim@matematika.its.ac.id}}

\author[Hausenblas]{Erika Hausenblas}
\address[Erika Hausenblas]
{Department of Mathematics,
Montanuniversity Leoben,
8700 Leoben, Austria}
\email{\mailto{erika.hausenblas@unileoben.ac.at}}

\author[Karlsen]{Kenneth H. Karlsen}
\address[Kenneth Hvistendahl Karlsen]
{Department of Mathematics,
University of Oslo,
NO-0316 Oslo, Norway}
\email{\mailto{kennethk@math.uio.no}}

\subjclass[2020]{Primary: 60H15; Secondary: 60G57}

\keywords{Yamada-Watanabe theory,
Wiener processes, L\'evy processes, pathwise uniqueness,
pure jump process, SPDEs in variational form}

\thanks{This work was supported by the
project Pure Mathematics in Norway, funded by
Trond Mohn Foundation and Troms{\o} 
Research Foundation, and by the Research Council 
of Norway under project 351123 (NASTRAN). 
Erika Hausenblas and Kistosil Fahim was supported by the 
ASEA network of the Austrian Exchange Service.}

\date{\today}

\begin{abstract}
The Yamada-Watanabe theory
provides a robust framework for understanding
stochastic equations driven by Wiener processes.
Despite its comprehensive treatment in the literature,
the applicability of the theory to SPDEs driven by
Poisson random measures or, more generally, L{\'e}vy processes
remains significantly less explored,
with only a handful of results addressing this context. 
In this work, we leverage a result by Kurtz to 
demonstrate that the existence
of a martingale solution combined with 
pathwise uniqueness implies the existence 
of a unique strong solution for SPDEs driven by both a
Wiener process and a Poisson random measure.
Our discussion is set within the variational 
framework, where the SPDE
under consideration may be nonlinear.
This work is influenced by earlier research 
conducted by the second author alongside 
de Bouard and Ondrej\'at.
\end{abstract}

\maketitle

\setcounter{secnumdepth}{2}
\setcounter{tocdepth}{2}


\section{Introduction}
The development of the Yamada-Watanabe uniqueness
theory for stochastic equations primarily focuses on those
driven by Wiener processes. The literature becomes
more sparse when considering equations influenced by both
Wiener noise and Poisson random measures,
particularly in the infinite-dimensional
noise case. Notably, in \cite{poisson1}, 
the theory is extended to stochastic
differential equations are driven by this dual 
noise setup on a locally compact space,
using the original Yamada and Watanabe method. 
Similarly, \cite{poisson3} applies the theory to 
variational solutions of PDEs driven solely 
by a Poisson random measure, again 
on a locally compact space, employing the same 
basic methods. Reference \cite{Bouard:2019ab} presents
the theory within a semigroup framework, focusing exclusively on
a Poisson random measure.

Our research aims to broaden this scope by investigating
the Yamada-Watanabe theory on Banach spaces,
accommodating both Poisson random measures
and Wiener processes in infinite dimensions within
a variational setting.

In their foundational work \cite{Yamada:1971ab,Watanabe:1971aa},
Yamada and Watanabe presented a proof leveraging the concept
of a regular version of conditional probabilities.
Their methodology has since proven to be exceptionally robust
and versatile, finding applications well beyond stochastic 
differential equations. The development of an abstract Yamada-Watanabe
theory by Kurtz in 2007 \cite{Kurtz:2007aa} marked an important
expansion of the theory to a broad array of stochastic problems.
Kurtz's original argument, rooted in the Skorokhod
representation theorem, abstracted the Yamada-Watanabe principle
to a new level of generality. It was further explored by
Kurtz in \cite{Kurtz:2014aa}.

Applying Kurtz's abstract framework, 
Bouard, Hausenblas, and Ondrejat in \cite{Bouard:2019ab} 
established that for stochastic evolution
equations driven by a Poisson random measure, the pathwise
uniqueness and the existence of a martingale solution
implies the existence of a unique, strong solution.
Our work seeks to advance this line of inquiry by
adapting these concepts to the variational framework and
by incorporating also a Wiener process into the analysis. 
Our initial motivation arose from the requirement 
in \cite{Hausenblas:2024aa} to establish specific uniqueness 
results for a L{\'e}vy-driven stochastic bidomain model in 
electrophysiology. However, given its broad applicability, 
we believe that a readily citable result of 
this kind would be valuable in different contexts, which 
led us to write this paper.

The remainder of this paper is structured as follows: In 
Section \ref{sec:prelim}, we review key concepts from stochastic 
analysis, including cylindrical Wiener processes, Poisson random 
measures, and L{\'e}vy processes. Sections \ref{kframework} 
and \ref{sec:uniqueness} are dedicated to the development of 
the Yamada-Watanabe theory. In Section \ref{kframework}, we 
explain Kurtz's abstract framework within the context of SPDEs, 
while Section \ref{sec:uniqueness} demonstrates how 
weak existence and pathwise uniqueness 
together imply uniqueness in law.


\section{Preliminary material}\label{sec:prelim}

Before going into the stochastic preliminaries, let us first
establish some notations that will be utilized throughout this paper.
We denote the set of real numbers as $\RR$, with
$\RR^+:=\{ x\in\RR:x>0\}$ and $\RR^+_0:=\RR^+\cup\{0\}$.
The set of natural numbers, including $0$, 
is denoted by $\mathbb{N}$.
If $(\mathcal F_t)_{t\in [0,T]}$ represents a filtration and
$\theta$ is a measure, we use $\mathcal F_t^\theta$ to
denote the augmentation of $\mathcal F_t$ with the $\theta$-null sets
contained in $\mathcal F_T^{\theta}$.

A measurable space $(S,\CS)$ is called \textit{Polish}
if there exists a metric $\varrho$ on $S$ 
such that $(S,\varrho)$ is a complete 
separable metric space and 
$\CS=\mathscr{B}(S)$ (i.e., $\CS=$ the Borel sets 
of $S$ with respect to $\varrho$).
For a Polish  space $(S,\CS)$, we denote by 
$\mathcal{B}(S)$ the set of all Borel measurable 
mappings $F:S\to\RR$ for which
$F$ is $\mathscr{B}(S)/\mathscr{B}(\RR)$ measurable. 
The collection of all finite non-negative 
measures on a Polish space $(S,\CS)$ is
denoted by $M^+(S)$. Additionally, $\CP(S)$
represents the set of probability measures on $S$.
As $S$ is a separable metric space, $\CP(S)$ can be 
metrized as a separable metric 
space \cite[Theorem 6.2, page 43]{Parthasarathy:1967ul}. 
Moreover, $\CP(S)$ is a compact metric space 
if and only if $S$ is compact 
\cite[Theorem 6.4, page 45]{Parthasarathy:1967ul}.

If a family of sets $\{S_n\in\CS:n\in\NN\}$ 
satisfies $S_n\uparrow S$, then
$M_{ \NN}(\{S_n\})$ denotes the family of all
$\NN\cup \{\infty\}$-valued measures $\mu$ on $\CS$ 
such that $\mu(S_n)<\infty$ for every $n\in\Bbb N$.
By $\CM_\NN(\{S_n\})$, we denote the $\sigma$-algebra 
on $M_{\NN}(\{S_n\})$ generated by the functions
$i_B:M_\NN(\{S_n\})\ni\mu \mapsto 
\mu(B)\in \NN$, $B\in \CS$.
The following simple result can be proven 
directly by constructing the metric.

\begin{lemma}\label{measure_lemma}
Let $(S,\CS)$ be a Polish space and consider a
family $\{S_n\in\CS\}$ of sets 
satisfying $S_n\uparrow S$. Then
$(M_\NN(\{S_n\}),\CM_\NN(\{S_n\}))$
is a Polish space.
\end{lemma}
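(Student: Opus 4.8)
The plan is to construct an explicit complete separable metric on $M_\NN(\{S_n\})$ whose Borel $\sigma$-algebra coincides with $\CM_\NN(\{S_n\})$. The key observation is that a measure $\mu\in M_\NN(\{S_n\})$ is $\sigma$-finite and integer-valued, hence purely atomic with finitely many atoms in each $S_n$; so $\mu$ restricted to each $S_n$ is a finite measure, and $M_\NN(\{S_n\})$ can be realized as a subset of $\prod_{n\in\NN} M^+(S_n)$ via the restriction maps $\mu\mapsto \mu|_{S_n}$. Since each $S_n$, as a Borel subset of the Polish space $S$, is itself Polish in a suitable metric (Borel subsets of Polish spaces are standard, hence homeomorphic to Polish spaces), the space $M^+(S_n)$ of finite nonnegative measures on $S_n$ is Polish under the weak (or rather the Prokhorov-type) topology, and so is the countable product. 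The set of consistent families (those $(\nu_n)_n$ with $\nu_{n+1}|_{S_n}=\nu_n$) is closed in the product, hence Polish; and the further restriction to $\NN\cup\{\infty\}$-valued measures must be shown to give a Borel, indeed a $G_\delta$, subset.

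First I would fix, for each $n$, a complete separable metric $\varrho_n$ on $S_n$ inducing its Borel structure (invoking the standard-Borel-space fact, or more concretely replacing $\varrho$ by a metric making $S_n$ closed — e.g. if $S_n$ is $G_\delta$ one can use the classical trick, and in general pass through a Borel isomorphism with a Polish space). Next I would equip $M^+(S_n)$ with a metric $d_n$ metrizing weak convergence of finite measures (total mass plus a bounded-Lipschitz distance), under which it is complete and separable. Then I would define on $M_\NN(\{S_n\})$ the metric
\begin{equation*}
d(\mu,\nu):=\sum_{n\in\NN} 2^{-n}\,\frac{d_n(\mu|_{S_n},\nu|_{S_n})}{1+d_n(\mu|_{S_n},\nu|_{S_n})}.
\end{equation*}
I would then check: (i) $d$ is a metric; (ii) $(M_\NN(\{S_n\}),d)$ is complete — a $d$-Cauchy sequence gives, for each $n$, a $d_n$-Cauchy sequence in $M^+(S_n)$ with limit $\nu_n$, the $\nu_n$ are automatically consistent because restriction $M^+(S_{n+1})\to M^+(S_n)$ is weakly continuous, and one must verify the limiting consistent family is again $\NN\cup\{\infty\}$-valued, which follows because the set of $\{0,1,2,\dots\}$-valued finite measures is weakly closed in $M^+(S_n)$ (a weak limit of integer-valued measures is integer-valued, since $\nu(B)$ for $B$ with $\nu(\partial B)=0$ is a limit of integers); (iii) separability — pick a countable weakly dense set in each $M^+(S_n)$ consisting of finitely-supported rational-weight measures supported on a fixed countable dense subset of $S_n$, intersect with integer weights and with the consistency constraint, and diagonalize; one checks this countable family is $d$-dense.

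Finally I would identify the $\sigma$-algebras. The maps $i_B:\mu\mapsto\mu(B)$ generate $\CM_\NN(\{S_n\})$ by definition; each $i_B$ is $\mathscr B(M_\NN(\{S_n\}),d)$-measurable because $\mu\mapsto\mu|_{S_n}(B\cap S_n)$ is a Borel function on $M^+(S_n)$ (evaluation on a fixed Borel set is Borel-measurable for the weak topology) and $i_B(\mu)=\lim_n \mu|_{S_n}(B\cap S_n)$ is a pointwise limit of such; hence $\CM_\NN(\{S_n\})\subseteq\mathscr B(d)$. For the reverse inclusion it suffices that the $d$-topology is generated by countably many maps that are $\CM_\NN(\{S_n\})$-measurable; this holds because $d$ is built from the $d_n\circ(\text{restriction})$, and each weak topology on $M^+(S_n)$ is generated by the evaluations $\nu\mapsto\int f\,d\nu$ over a countable set of bounded continuous $f$, while $\mu\mapsto\int_{S_n} f\,d\mu = \int f\,d(\mu|_{S_n})$ is $\CM_\NN(\{S_n\})$-measurable (uniform limit of simple-function integrals, each a finite combination of the $i_B$'s). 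Thus $\mathscr B(d)\subseteq\CM_\NN(\{S_n\})$ and the two agree. The main obstacle I anticipate is the completeness argument — specifically, making rigorous that a Cauchy sequence's limit stays inside $M_\NN(\{S_n\})$ rather than escaping to a non-integer-valued or non-consistent measure; the cleanest route is to embed $M_\NN(\{S_n\})$ as a closed subset of $\prod_n M^+(S_n)$ cut out by the consistency equations and the closed condition of being integer-valued, after which completeness and the $\sigma$-algebra identification become bookkeeping.
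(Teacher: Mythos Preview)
Your proposal is correct and follows essentially the same route as the paper: both build the metric as a weighted sum $\sum_n \lambda_n\,\frac{d_n(\mu|_{S_n},\nu|_{S_n})}{1+d_n(\mu|_{S_n},\nu|_{S_n})}$ over the restrictions to $S_n$, differing only in the choice of the factor metrics $d_n$ (the paper uses a fixed countable separating family of bounded continuous functions on $S$, you use a Prokhorov/bounded-Lipschitz distance). Your outline is in fact more thorough than the paper's sketch, which does not explicitly verify completeness or the identification $\mathscr{B}(d)=\CM_\NN(\{S_n\})$.
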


\begin{proof}
Fix any $n\in\NN$. Let $\mu \in M^+_{\NN}(S_n)$ be a 
mapping $\mu:\mathscr{B}(S_n) \to \NN$ 
that is measurable and satisfies $\mu(S_n) < \infty$, 
where the $+$ superscript indicates the 
positive elements in $M_{\NN}(S_n)$.
Define $F := \{f_n \in C(S) : n \in \NN\}$
as a family of functions such that $|f_n|_{C(S)} \leq 1$ 
and $F$ separates points in $S$. Here, $C(S)$ denotes the 
set of continuous real-valued functions defined on $S$, 
equipped with the supremum norm $|f|_{C(S)}$. 
By the Hahn-Banach Theorem, and 
given that $\CP(S)$ is separable, 
there exists such a countable set that 
separates points. Additionally, let $\{\lambda_n : n \in \NN\}$ be 
such that $\sum_{n \in \NN} \lambda_n < \infty$. 
Now, define the following metric for 
$\mu_1, \mu_2 \in M_{\NN}^+(S_n)$:
$$
d_{S_n}(\mu_1, \mu_2) := \sum_{n \in \NN} \lambda_n 
\frac{ \left| \langle \mu_1,f_n\rangle 
- \langle\mu_2,f_n\rangle \right|}
{1 + \left| \langle \mu_1,f_n\rangle 
- \langle\mu_2,f_n\rangle \right|}.
$$
This defines a metric on $M_{\NN}^+({S_n})$. 
For a measure $\mu: \CM(\{S_n\}) \to \RR_0^+$, define 
$\mu|_{S_n}(A) := \mu(A \cap S_n)$ for $A \in \mathscr{B}(S)$. 
Next, we define a metric on $M_{\NN}^+(\{S_n\})$:
$$
d_{S}(\mu_1, \mu_2) := \sum_{n \in \NN} \lambda_n 
\frac{d_{S_n}(\mu_1|_{S_n}, \mu_2|_{S_n})}
{1 + d_{S_n}(\mu_1|_{S_n}, \mu_2|_{S_n})}, 
\quad \mu_1, \mu_2 \in M_{\NN}^+(\{S_n\}).
$$
It is straightforward to verify that $d_S$ defines 
a metric on $M_{\NN}^+(\{S_n\})$. 
It remains to show that this metric is complete and 
that $M_\NN(\{S_n\})$ is separable.

Note that if $\mu^k, \mu \in M_{\NN}^+(\{S_n\})$ 
and $\mu^k \to \mu$ in the weak topology, 
then we have $\mu^k|_{S_n}(f) \to \mu|_{S_n}(f)$ for each $f$. 
Hence, $d_{S_n}(\mu^k|_{S_n}, \mu|_{S_n}) \to 0$, 
and therefore $d_S(\mu^k, \mu) \to 0$. 
Finally, separability follows again from the 
Hahn-Banach Theorem.
\end{proof}

Let us transition to the stochastic framework.
All stochastic processes are defined on 
a filtered probability space. 
Throughout the paper, we denote this 
filtered probability space by 
\begin{equation}\label{probabilitysp}
	\MA=(\Omega,\CF,\BF, \PP),
\end{equation}
where $\PP$ is complete on $(\Omega, \CF)$, and 
$\BF=(\CF_t)_{t\in [0,T]}$ is a filtration 
satisfying the \textit{usual conditions}:
\begin{enumerate}
	\item[(i)] for each $t\in [0,T]$, $\CF_t$
	contains all $(\CF,\PP)$-null sets;
	
	\item[(ii)] the filtration $\mathbb{F}$ 
	is right-continuous.
\end{enumerate}

A L\'evy process encompasses both 
a cylindrical Wiener process and a pure jump process, 
for both of which we will provide detailed definitions. 
To understand the characteristics of a pure jump process, 
we utilize Poisson random measures. 

\subsection{The cylindrical Wiener process}\label{defcyw}

Let $\WienH$ denote a separable Hilbert space. 
Consider $\bW$ as a cylindrical Wiener process
evolving over $\WienH$, defined on $\MA$. By using
the spectral decomposition theorem, the Wiener
process can be expressed as
$$
\bW(t) := \int_0^t \sum_{k=1}^\infty h_k
\, d\beta_k(t), \quad t \ge 0,
$$
where $\{\beta_{k} : k \in \mathbb{N}\}$ is a family
of mutually independent Brownian motions,
and $\{h_k : k \in \mathbb{N}\}$
forms an orthonormal basis in $\WienH$. Referencing
Proposition 4.7 in \cite[p.~85]{DaPrato:2014aa}, we 
note that this representation can be considered 
without loss of generality.

To facilitate our later discussions, let us introduce the concept
of Hilbert-Schmidt operators between two Hilbert spaces
$\CH$ and $H$.
We define the space of all Hilbert-Schmidt operators
$\mathcal{L}_{\text{HS}}(\CH,H)$ as follows:
\begin{equation*}
	\mathcal{L}_{\text{HS}}(\CH,H)
	:=\left\{ L:\CH\to H: \sum _{i=1}^\infty \abs{Le_{i}}^{2}_{H}
	=\sum _{i,k=1}^\infty \left|\left\langle
	Le_{i},f_{k} \right\rangle\right |^{2}<\infty\right\},
\end{equation*}
where $(e_i)$ and $(f_i)_i$ denote orthonormal bases in
$\CH$ and $H$, respectively.

\subsection{Poisson random measures 
and L{\'e}vy processes}\label{deflev}

To begin, we revisit \cite[Definition I.8.1]{Ikeda:1981aa}.
Given the diverse and not always consistent approaches
to defining a time-homogeneous Poisson random measure
in the literature, we present our own definition
for clarity and precision in this context.

\begin{definition}[{\cite[Definition I.8.1]{Ikeda:1981aa}}]
\label{def-PRM}
Let $(S,\CS)$  be a Polish space and
$\nu$ be a $\sigma$--finite measure on $(S,\CS)$, where
$\sigma$--finite  means that there exists a nested
sequence $\{S_n\in\CS\}_{n\in\NN}$
such that $S_n\uparrow S$ and $\nu(S_n)<\infty$
for every $n\in\Bbb N$.

A \textit{time homogenous Poisson random measure} $\eta$
over a filtered  probability space $(\Omega,\CF,\BF,\PP)$,
where $\BF=(\CF_t)_{t\in\INT}$, is a measurable function
$$
\eta: (\Omega,\CF)\to \bigl(M_{\NN}(\{S_n \times\INT\}),
\mathcal{M}_{\NN}(\{S_n\times\INT \})\bigr),
$$
such that 
\begin{enumerate}
	\item[(i)] for each {$B\in  \CS \otimes
	\mathscr{B}([0,T])$} with $\EE\,\eta(B) < \infty$,
	$\eta(B):=i_B\circ \eta : \Omega\to \NN
	\, $\footnote{Here, the mapping $i_B$ is defined
	by $i_B:M_{\NN}(\{S_n\})\ni\mu
	\mapsto \mu(B)\in \NN$, $B\in \CS$.}
	is a Poisson random variable with parameter
	$\EE\, \eta(B)$, otherwise $\eta(B)=\infty$ a.s.

	\item[(ii)] $\eta$ is independently 
	scattered, i.e., if the sets
	$B_j \in   \CS\otimes \mathscr{B}([0,T] )$,
	$j=1,\ldots,n$, are disjoint, then the random
	variables $\eta(B_j)$, $j=1,\ldots,n $,
	are mutually independent;

	\item[(iii)] for each $U\in \CS$, 
	the ${\mathbb{N}}$-valued
	process $(N(t,U))_{t\in \INT }$  defined by
	$$
	N(t,U):= \eta(U \times (0,t]), \;\; t\in \INT
	$$
	is $\BF$-adapted, and its increments are stationary
	and independent of the past, i.e., if $t>s\geq 0$, 
	$N(t,U)-N(s,U)=\eta(U \times (s,t])$ is
	independent of $\mathcal{F}_s$.
\end{enumerate}
\end{definition}

\begin{remark}\label{wegensn}
If $\nu$ is a finite measure on a Polish space $(S, \CS)$, then 
for any $U \in \CS$, $N(t, U)$ is a Poisson-distributed random 
variable with parameter $t\, \nu(U)$. 
In particular, the number of jumps is 
finite. However, if $\nu$ is only $\sigma$-finite 
and $\nu(S_n) \to \infty$ as $n \to \infty$, 
then $N(t, S) = \infty$, meaning there are infinitely 
many jumps within any time interval 
$[t_1, t_2]$, where $t_1 < t_2$.
\end{remark}

In Definition \ref{def-PRM},  the assignment
\begin{equation}\label{eq:intensity0}
	\nu: \CS \ni A \mapsto
	\EE\big[\eta( A\times(0,1))\big]
\end{equation}
defines a uniquely determined measure, called the
\textit{intensity measure} of the 
Poisson random measure $\eta$. Moreover, it 
turns out that the \textit{compensator}
$\gamma$ of $\eta$ is uniquely determined by
$$
\gamma: \CS \times \mathscr{B}([0,T])
\ni (A,I)\mapsto  \nu(A)\times \Leb_{[0,T]}(I),
$$
where $\Leb_{[0,T]}$ denotes the Lebesgue 
measure on $[0,T]\subset \RR$. The difference 
between a time-homogeneous Poisson random measure
$\eta$  and its compensator $\gamma$, i.e.,
$$
\tilde \eta:=\eta-\gamma,
$$
is called the \textit{time-homogeneous
compensated Poisson random measure}.

\begin{remark}\label{poissonskorohod}
Fix  $n\in\NN$. If $S_n$ is a metric space, then the process 
\begin{equation*}
	\Omega\times [0,T] \ni (\omega,t)
	\mapsto \eta(\cdot \times [0,t])(\omega)
	\in M_{\NN}(S_n)
\end{equation*}
is c\'adl\'ag and belongs to $\DD([0,T];M_{ \NN}(S_n))$.

If $S_n\subset E$, $E$ is a Banach space of type $p$, 
$1\le p\le 2$, and 
$$
\lim_{n\to\infty} \int_{S_n}|z|^p\nu(dz)<\infty,
$$ 
then
\begin{equation*}
	L:\Omega\times [0,T] \ni (\omega,t)
	\mapsto \int_S z 
	\,\tilde \eta(\cdot \times [0,t])(\omega)\in E
\end{equation*}
is c\'adl\'ag and belongs to $\DD([0,T];E)$.		
\end{remark}

Poisson random measures arise in
a natural way by means of a L\'evy process.

\begin{definition} Let $E$ be a Banach space.
A stochastic process $\{ L(t):t\ge 0\}$ is a L\'evy process
if the following conditions are met:
\begin{itemize}
	\item for any $l\in\NN$ and $0\le t_0<t_1<\cdots <t_{{l}}$,
	the random variables $L(t_0)$, $L(t_1)-L(t_0)$,
	$\ldots$, $L(t_{{l}})-L(t_{{l}-1})$ are independent;
	
	\item $L_0=0$ a.s.;
	
	\item For all $0\le s<t$, the distribution of $L(t+s)-L(s)$
	does not depend on $s$;
	
	\item $L$ is stochastically continuous;
	
	\item the trajectories of $L$ are a.s.\ c\'adl\'ag on $E$.
\end{itemize}
\end{definition}

The characteristic function of a L{\'e}vy process is
uniquely determined by the  L{\'e}vy-Khinchin formula.
Before describing this formula, let us
first introduce the concept of the L{\'e}vy measure $\nu$.

\begin{definition}[{\cite[Chapter 5.4]{Linde:1986mz}}]\label{def:levy}
Let $E$ be a separable Banach space with dual $E^\prime$.
A symmetric $\sigma$-finite Borel measure
$\lambda$ on $E$ is called a \textit{L\'evy measure} if and only if
(i) $\lambda(\{0\} )=0$  and (ii) the
function\footnote{As remarked in \cite[Chapter 5.4]{Linde:1986mz},
we do not need to suppose that $\int_E(\cos\langle x,a\rangle -1)
\,\lambda(dx)$ is finite. However, if $\lambda$ is a symmetric L\'evy
measure, then, for each $a \in E^\prime$, the
integral is finite, see Corollary 5.4.2 in \cite{Linde:1986mz}.}
$$
E^\prime \ni a\mapsto
\exp \left(\int_E \bigl(\cos\langle x,a\rangle -1\bigr)
\, \lambda(dx)\right)
$$
is a characteristic function of a Radon measure 
on $E$ \cite[p. 17]{Linde:1986mz}.

In the measure space $(E, \mathscr{B}(E))$, a $\sigma$-finite
Borel measure $\lambda$ is termed a L{\'e}vy measure if its symmetric
part $\frac{1}{2}(\lambda + \lambda^-)$, where
$\lambda^-(A) = \lambda(-A)$ for $A \in \mathscr{B}(E)$, 
qualifies as a L{\'e}vy measure. The collection of all 
L{\'e}vy measures on $(E, \mathscr{B}(E))$ 
is denoted by $\CL(E)$.
\end{definition}

For the definition of Banach 
spaces of (Rademacher) type $p$, where $p \in [1,2]$, 
see \cite[p.~54]{Hytonen:2017aa} 
and \cite[p.~40]{Linde:1986mz}. Common examples of 
such Banach spaces include $L^q$ 
spaces with $p \leq q < \infty$, defined over bounded domains, as 
well as the corresponding Besov spaces (see \cite{Brzezniak:1999aa} 
for further details). Moreover, if a Banach space $E$ is of type $p$, 
then it is also of type $q$ for all $q \geq p$.

The L\'evy-Khintchine formula establishes that for any $E$-valued
L{\'e}vy process $\{L(t) : t \ge 0\}$, there exist a positive
operator $Q: E' \rightarrow E$, a non-negative
measure $\nu$ concentrated on $E \setminus \{0\}$ with
the property that $\int_E 1 \wedge |z|_E^p \,\nu(dz) < \infty$,
and an element $m \in E$ such that
(see \cite{Applebaum:2009uq,Araujo:1978ab}
or \cite[Theorem 5.7.3]{Linde:1986mz})
\begin{equation}\label{eq:intensity-measure}
	\begin{split}
		\EE\left[ e^{i\la L(1),x\ra}\right]
		& = \exp\Biggl(i\la m,x\ra -\frac 12 \la Qx,x\ra
		\\ & \qquad\qquad
		{-}\int_E \left( 1-e ^ {i\la y,x\ra }+\mathbf{1}_{(-1,1)}(|y|_E)
		i \la y,x\ra \right) \,\nu(dy)\Biggr),
	\end{split}
\end{equation}
for each $x\in E'$. 
We refer to the measure $\nu$ as the \textit{L{\'e}vy measure} 
of the L{\'e}vy process $L$. Furthermore, the triplet $(Q, m, \nu)$ 
uniquely characterizes the law of $L$.

We now proceed to construct a Poisson 
random measure whose intensity measure is determined 
by a L{\'e}vy measure $\nu$. Let $\MA$ be a 
filtered probability space, see \eqref{probabilitysp}, 
and let $E$ be a $p$-stable Banach space for some $p \in [1, 2]$. 
Consider an $E$-valued L{\'e}vy process 
$\{L(t) : t \geq 0\}$, defined on $\MA$, which is of pure 
jump type\footnote{A L{\'e}vy process is of 
pure jump type if $Q = 0$.} with L{\'e}vy measure 
$\nu$. To this process, we associate a counting 
measure $\eta_L$, also defined on $\MA$, as follows:
$$
\mathscr{B}(E)\times 
\mathscr{B}((0,T])\ni (B,I) \mapsto \eta_L(B\times I)
:= \# \bigl\{s\in I \mid \Delta_s L \in B\bigr\} \in \NN.
$$
Here, the jump process $\Delta L 
= \{\Delta_t L : 0\le t<\infty\}$
linked to $L$ is defined by
$$
\Delta_t L := L(t) - L(t-) = L(t) - \lim_{\epsilon\to 0}
L(t-\varepsilon), \quad t> 0, \quad \Delta_0 L=0.
$$
If $\nu$ is symmetric and supported within the unit ball,
then $\eta_L$ function as a time-homogeneous
Poisson random measure with intensity measure $\nu$.
Furthermore, we can express $L(t)$ through the integral
$$
L(t) = \int_0^t \int_Z z\, 
\tilde{\eta}_L(dz,ds), \quad t\ge 0,
$$
where $\tilde{\eta}_L$ denotes the
compensated version of $\eta_L$.

Conversely, when given a time-homogeneous Poisson
random measure $\eta$ on a $p$-stable Banach space $E$,
with $p\in [1,2]$, it is possible
to construct a L\'evy process $L$.
The integral
$$
\int_I\int_Z  z\,\tilde \eta(dz,ds)
$$
is well-defined for any $I\in\mathscr{B}([0,T])$
if and only if the intensity measure $\nu$ 
of $\eta$ is a L\'evy measure, see
\cite[p. 123, Theorem (2.1)]{Dettweiler:1977aa}.

For further information about the connection between Poisson
random measures and L{\'e}vy processes, we direct the reader
to  Applebaum \cite{Applebaum:2009uq}, Ikeda and Watanabe
\cite{Ikeda:1981aa}, and Peszat and Zabczyk \cite{Peszat:2007pd}.

The formulation of stochastic integrals within arbitrary
Banach spaces present notable challenges depending 
significantly on the geometric structure of 
the space \cite{Neerven:2015aa}.
Our discussion in what follows is therefore 
confined to UMD Banach spaces of type $p\in [1,2]$ (and thus 
of martingale type $p$), where the $p$ depends 
on the integrability properties 
of the specific L{\'e}vy measure being used.
Standard examples of Banach spaces with 
martingale type $p\in [1,2]$ 
include Hilbert spaces (martingale type 2), 
$L^p$ spaces, and uniformly convex 
spaces (which have martingale type 2). 
We refer to \cite[Chapters 3.5 \& 4]{Hytonen:2016aa} 
for details. 

We define the space of 
possible integrands as follows:
\begin{align*}
	& \mathcal{M}^p([0,T];L^p(Z, \nu; E))
	:= \Biggl\{ \xi : [0,T] \times \Omega \to L^p(Z,\nu;E) 
	\mid \\ & \qquad  
	\text{$\xi$ is progressively measurable and} \,\,
	\EE \int_0^T \abs{\xi(t)}_{L^p(Z, \nu; E)}^p 
	\, dt < \infty \Biggr\}.
\end{align*}
In \cite{Brzezniak:2009aa}, the 
second author and Brze\'{z}niak
demonstrated the existence and
uniqueness of a continuous linear operator
that assigns to each progressively measurable process
$\xi \in \mathcal{M}^p([0,T];L^p(Z, \nu; E))$ 
an adapted c\'adl\'ag $E$-valued process. 
This stochastic integral process is denoted by
$$
\int_0^\cdot \int_Z \xi(r,x) 
\, \tilde{\eta}(dx,dr),
$$
and it satisfies the following: for
a random step process $\xi \in \CM([0,T], L^p(Z,\nu;E))$ with the form
$$
\xi(r{,z}) = \sum_{j=1}^{l} 
\mathbf{1}_{(t_{j-1},t_{j}]}(r)\, \xi_j{(z)}, 
\quad r\ge 0,
$$
where $\{0=t_0<t_1<\ldots<t_{{l}}\}$ 
is a finite partition of $[0,T]$ and $\xi_j{(z)}$ 
is an $E$-valued, $\CF_{t_{j-1}}$-measurable, 
$p$-summable random variable for each $j$, then
$$
\int_0^t \int_Z \xi(r,z)\, \tilde{\eta}(dz,dr)
= \sum_{j=1}^{{l}}  \int_Z  \xi_j (z)\, \tilde{\eta}
\left(dz,(t_{j-1}\wedge t, t_{j}\wedge t] \right).
$$
Furthermore, the operator
$$
I: \mathcal{M}^p ([0,T];L^p(Z,\nu;E)) \ni\xi
\mapsto \int_0^\cdot \int_Z \xi(r,x)\,\tilde{\eta}(dx,dr)
\in \mathcal{M}^p ([0,T];E)
$$
is continuous. 
That is, there exists a constant $C=C(E)$,
independent of $\xi$, $\eta$, and $\nu$, such that
$$
\EE \left| \int_0^t \int_Z \xi(r,z)
\, \tilde{\eta}(dz,dr) \right|^p \leq C
\EE \int_0^t\int_Z|\xi(r,z)|^p\, \nu(dz)\,dr, \quad t\geq 0.
$$

When working with a Poisson random measure, 
the associated solution process can be c\'adl\'ag and 
predictable in one space while being only 
progressively measurable in another. 
This is why we make a distinction between 
c\'adl\'ag behavior and predictability versus 
progressive measurability. For completeness, we 
provide a simple example to illustrate this point.

\begin{example}\label{stex}
First, let us construct a space-time Poisson random 
measure with a prescribed intensity measure 
$\nu$ on $\RR^d$ (compare also with 
\cite[Proposition 7.21]{Peszat:2007pd}). 
To this end, let us specify $\nu_0$ 
as a L\'evy measure on 
$\RR\setminus \{0\}$, assuming that for 
any $n\in\NN$, the measure  $\nu_0$ is finite 
on $\RR\setminus \left[-\tfrac{1}{n},\tfrac{1}{n}\right]$ 
and $\int_{[-1,1]\setminus\{0\}} |z|^2\, \nu_0(dz)<\infty$.
Set 
$$
S_n:=\left(\RR\setminus \left[ -\tfrac 1n,\tfrac 1n\right]\right)
\times \RR^d
$$
and let us define a measure $\nu_n$ on $S_n$ by
$$
\nu_n(A\times B):=\nu_0(A) \Leb(B),
\quad A\in\mathscr{B}
\left(\RR\setminus \left[ -\tfrac 1n,\tfrac 1n\right]\right),
\,\, B\in \mathscr{B}(\RR^d),
$$
where $\Leb$ denotes the Lebesgue 
measure on $\RR^d$. To ensure that $\nu_n$ is well-defined as a 
measure on $S_n$, we note that the collection 
of rectangular sets $A\times B$, where 
$A \in \mathscr{B}\left(\RR \setminus 
\left[-\frac{1}{n}, \frac{1}{n}\right]\right)$ 
and $B \in \mathscr{B}(\RR^d)$, forms a $\pi$-system. 
Since $\nu_n$ is finitely additive and satisfies the 
properties of a Dynkin system, it extends uniquely 
to the $\sigma$-algebra $\mathscr{B}(S_n)$. 
Introduce the $\sigma$-algebra 
$\mathcal{S}:=\vee_{n\in\NN}
\mathscr{B}(S_n)$.\footnote{Let $Z$ be a set, 
and let $\mathcal{A}$ and $\mathcal{B}$ be collections 
of subsets of $Z$. We define $\mathcal{A} \vee \mathcal{B}$ 
as the smallest $\sigma$-algebra on $X$ that contains 
every set in both $\mathcal{A}$ and $\mathcal{B}$,  
that is, $\mathcal{A} \vee \mathcal{B}$ is generated by the union 
$\mathcal{A} \cup \mathcal{B}$: $\mathcal{A} \vee \mathcal{B} 
= \sigma(\mathcal{A} \cup \mathcal{B})$.} 
We define the measure $\nu$ 
on $\mathcal{S}$ as the unique extension 
of the sequence of measures $\nu_n$ defined on 
$\mathscr{B}(S_n)$ for $n \in \mathbb{N}$. 
This extension is well-defined since any set in 
$\mathcal{S}$ can be approximated 
from below by its restrictions to the 
truncated domains $S_n$.

The space-time Poisson noise on $(\RR^d, \CB(\RR^d))$ with 
jump intensity measure $\nu$, defined over a probability 
space $(\Omega, \CF, \PP)$, is a 
$\CF / \CM(M_\NN(\{S_n \times [0, T]\}))$-measurable 
mapping\footnote{Let $(X_i, \mathcal{X}_i)$, $i = 1, 2$, 
be two measurable spaces. A function $f: X_1 \to X_2$ 
is $\mathcal{X}_1/\mathcal{X}_2$-measurable if, for all 
$A \in \mathcal{X}_2$, the pre-image $f^{-1}(A) := 
\{x \in X_1 : f(x) \in A\}$ belongs to $\mathcal{X}_1$.}
\begin{equation}\label{eq:eta-def}
	\eta : \Omega \to 
	\CM\left(M_\NN(\{S_n \times [0,T]\})\right),
\end{equation}
such that:
\begin{itemize}
	\item For any $U \in \mathcal{S} \otimes \mathscr{B}([0,T])$ 
	with $(\nu \times \Leb_{[0,T]})(U) < \infty$, the 
	random variable $\eta(U) := i_U \circ \eta$ is 
	Poisson-distributed with parameter 
	$(\nu \otimes \Leb_{[0,T]})(U)$.

	\item If $U_1 \in \mathcal{S} \otimes \mathscr{B}([0,T])$ 
	and $U_2 \in \mathcal{S} 
	\otimes \mathscr{B}([0,T])$ are disjoint, 
	then the random variables $\eta(U_1)$ and $\eta(U_2)$ are 
	independent, and $\eta(U_1 \cup U_2) = \eta(U_1) 
	+ \eta(U_2)$ almost surely.
\end{itemize}

The space-time Poisson random measure can 
be represented as a function-valued Poisson 
process with a Poisson random measure on the space 
$E_0 := B_{2,\infty}^{-\frac{d}{2}}(\RR^d)$.\footnote{The choice 
of $E_0$ is motivated by the fact that the Dirac 
measure $\delta$ belongs to the Besov space 
$B_{2,\infty}^{-\frac{d}{2}}(\RR^d)$ ($=E_0$) 
(see \cite[p.~34]{Runst:1996aa}). 
Since the Besov embedding $B_{2,2}^{-\frac{d}{2}}(\RR^d) 
\hookrightarrow B_{2,\infty}^{-\frac{d}{2}}(\RR^d)$ holds 
(see \cite[p.~30]{Runst:1996aa}), the Dirac measure $\delta$ 
does not belong to the Sobolev space $H_2^{-\frac{d}{2}}(\RR^d)$, 
which coincides with the Lizorkin-Triebel 
space $F_{2,2}^{-\frac{d}{2}}(\RR^d)$ 
and the Besov space $B_{2,2}^{-\frac{d}{2}}(\RR^d)$. 
However, $\delta$ belongs to the Sobolev space 
$H_2^\gamma(\RR^d)$ for $\gamma < -\frac{d}{2}$.} 
To verify this claim, let us first define the mapping
$$
f_n:S_n\longrightarrow E_0:
(z,x)\mapsto f_n(z,x)=z\,\delta_x,
$$
which maps points $(z, x) \in S_n$ 
to elements of the function space $E_0$.
The set
$$
E_n := \bigl\{ f_n(z,x) \mid (z,x)\in S_n\bigr\}
\subset E_0,
$$
represents the image of $S_n$ under $f_n$, 
for each $n\in \NN$.  
Since $E_0$ is not of type $2$, we embed 
$E_0$ into the (type $2$) Besov space 
$E:=B_{2,2}^{-\gamma }(\RR^d)$, 
where $\gamma>\frac{d}{2}$ is arbitrary. 
For each $n\in\NN$, the measure $\mu_n$ on $E$ is then 
defined as the pushforward of the measure $\nu_n$ under $f_n$, i.e.,
$$
\mu_n(B):= \nu_n(f_n^{-1}(B\cap E_n)), 
\quad B\in\mathscr{B}(E).
$$
It is straightforward to verify that $\mu_n$ 
converges to a L\'{e}vy measure on $E$ as $n \to \infty$. 
With a slight abuse of notation, we will 
denote the limit measure by $\nu$.

Now, let us consider the solutions $\xi$ 
and $\xi_n$, $n\in \NN$, to the following SPDEs:
$$
(\ast)\,\,\, 
d\xi(t)-\Delta \xi(t)\, dt 
= \int_E z \, \tilde \eta(dz, dt),
\quad
(\ast)_n \,\,\,
d\xi_n(t) - \Delta \xi_n(t)\, dt 
= \int_{E_n} z \,\tilde  \eta(dz, dt),
$$
where $\tilde \eta$ is the compensator of the 
random measure $\eta$ defined in \eqref{eq:eta-def}, 
and we use additive noise for simplicity of presentation. 
It follows that
$$
[0, T] \ni t \mapsto \xi(t) = \int_0^t \int_E 
e^{-(t-s)\Delta} z \, \tilde \eta(dz, ds)
$$
and
$$
[0, T] \ni t \mapsto \xi_n(t) = \int_0^t \int_{E_n} 
e^{-(t-s)\Delta} z \, \tilde \eta(dz, ds),
$$
are the unique solutions to $(\ast)$ and $(\ast)_n$, respectively, 
where $(e^{\Delta t})_{t \geq 0}$ denotes the heat semigroup. 
Consequently, the processes $\xi$  and $\{\xi_n: n\in\NN\}$ 
are c\'adl\'ag in the ``large space" $E=B_{2,2}^{-\gamma }(\RR^d)$.  
More precisely, the process $\xi_n$ is c\'adl\'ag 
in $E$, and for all $t \in [0\, T]$,
$\lim_{s \uparrow t} \xi_n(s)$ is predictable. Moreover, the 
process $[0, T] \ni t \mapsto \lim_{s \uparrow t} \xi(s)$ 
is also c\'adl\'ag and predictable in this space.

In the ``smaller space" setting 
of $E_0= B_{2,\infty}^{-\frac{d}{2}}(\RR^d)$, 
the processes $\xi_n$ remain  c\'adl\'ag, due to 
the finiteness of the L\'evy measure. In particular, $\PP$-a.s., 
$\xi_n \in \mathbb{D}([0, T]; E_0)$. In addition, 
for $\tilde E:=B_{2,2}^{-\frac{d}{2}}(\RR^d)\subset E$,
$$
\sup_{0 \leq t \leq T} \EE \abs{\xi_n(t)}_{\tilde E}^2 < \infty,
\quad n\in \NN.
$$
However, $\xi$ is not càdlàg in $E_0$, as the space is 
not of type $p$. Nevertheless, we have
$$
\EE \int_0^T \abs{\xi(s)}_{\tilde E}^2 \, ds < \infty,
$$
and $\xi$ is progressively measurable in $\tilde E$. 
In particular, for all $s \in \left[-\frac{d}{2},\frac{d}{2}+1\right)$, 
there exists a constant $C > 0$, 
independent of $n\in \NN$, such that
$$
\EE \int_0^T \abs{\xi_n(s)}^2_{B_{2,2}^s}\, ds\le C,
\quad 
\EE \int_0^T \abs{\xi(s)}_{B_{2,2}^s}^2\, ds\le C.
$$

In closing, let us mention that to establish the 
progressive measurability of a specific version 
of $\xi_n$, where $n \in \NN$, one may employ 
the $k$-th order shifted Haar projection, denoted as $\xi_n^{(k)}$. 
This approach use piecewise constant approximations of $\xi_n$ 
to construct a sequence $(\xi_n^{(k)})_{k \in \NN}$ that is 
predictable. Moreover, one can 
show that as $k \to \infty$,
$$
\xi_n^{(k)} \to \xi_n \quad 
\text{in $L^2(\Omega; L^2([0, T],\tilde E))$}.
$$
For a proof, see 
\cite[Appendix C]{Brzezniak:2011aa}.
\end{example}


\section{Kurtz's framework}\label{kframework}
Our goal is to extend the Yamada-Watanabe theorem, which 
links the existence and uniqueness of weak and strong solutions 
to stochastic equations, to a broader class of L{\'e}vy-driven SPDEs 
using Kurtz’s abstract principles \cite{Kurtz:2014aa,Kurtz:2007aa}. 
Following \cite{Liu:2015aa} and \cite{Brzezniak:2014aa}, we revisit 
the variational SPDE framework and demonstrate how filtration 
and regularity conditions can be 
incorporated into Kurtz’s approach.

Let us turn our attention to the abstract 
framework introduced in \cite{Kurtz:2014aa} 
and \cite{Kurtz:2007aa}. Consider $B_1$ and $B_2$, both 
metric spaces, and a Borel measurable function
$\Gamma: B_1 \times B_2 \to \RR$. Let there be a random
variable $Y$ (the input of the model), 
taking values in $B_2$, with law $\rho$,
and being defined on a given probability 
space $(\Omega, \CF, \PP)$. Our focus is on 
identifying a solution to the equation
$\Gamma(X,Y) = 0$. Specifically, we seek 
a random variable $X$, taking values 
in $B_1$ and being defined over $(\Omega, \CF, \PP)$,
that satisfies
\begin{equation}\label{sol01}
	\Gamma(X,Y) = 0
	\quad
	\text{in the sense that}
	\quad
	\PP\left( \bigl\{ \Gamma(X,Y) = 0\bigr\}\right) =1.
\end{equation}

We introduce the concept of a strong solution 
by defining it as follows:

\begin{definition}\label{strongsol}
A pair $(X,Y)$ constitutes a strong solution to the 
problem \eqref{sol01} defined by $(\Gamma,\rho)$, 
where $\Law(Y)=\rho$, if there exists a
Borel measurable function $F:B_1 \to B_2$ 
such that \eqref{sol01} holds with 
$X=F(Y)$, $\PP$-a.s.
\end{definition}

If a strong solution exists over  some probability space 
$(\Omega, \CF, \PP)$, then $(X,Y)$ has  a  joint distribution. 
In particular, there exists a probability measure
$$
\distr:\mathscr{B}(B_1\times B_2)\to[0,1],
$$
such that {$\distr\left (B_1\times A\right)=\rho(A)$} 
for all $A\in\mathscr{B}(B_2)$ and
$\PP(X\in A)=\int_{B_2} \distr(A,y)\rho(dy)$ 
for all $A\in\mathscr{B}(B_1)$.
Let us denote this joint distribution of 
the two random variables $X,Y$ by $\mu_{X,Y}$. 
If $(X,Y)$ is a solution to \eqref{sol01}, then 
$\distr_{X,Y}$ is determined by the distribution $\rho$ of 
the input $Y$ and the mapping $F$, 
see \cite[Lemma 1.3]{Kurtz:2014aa}.

To further elaborate on the problem from the perspective
of probability laws, we are interested in finding 
a joint distribution---also called 
a joint solution measure---$\mu \in \mathcal{P}(B_1 \times B_2)$.
This probability measure $\mu$ should satisfy 
$\mu(B_1 \times A) = \rho(A)$ for all measurable
subsets $A \in \mathscr{B}(B_2)$, along
with the requirement that:
\begin{equation}\label{solma}
	\int_{B_1 \times B_2}
	\left|\Gamma(x, y)\right|
	\,\mu(dx, dy) = 0.
\end{equation}
Given $\Gamma$ and $\rho$,  we define 
$\mathcal{S}_{\Gamma,\rho}$ as follows:
\begin{equation}\label{eq:S-Gamma-rho}
	\begin{split}
		& \text{$\CS_{\Gamma,\rho}$ is the 
		set of all joint solution measures 
		$\mu \in \mathcal{P}(B_1 \times B_2)$}
		\\ & 
		\text{for which the constraint 
		\eqref{solma} is satisfied 
		and $\mu(B_1 \times \cdot) = \rho$.}
	\end{split}
\end{equation}
In this way, we can speak also of a weak solution 
to the problem $(\rho, \Gamma)$:

\begin{definition}\label{weaksol}
A weak solution of the problem $(\Gamma,\rho)$
is a pair of random variables $(X,Y)$ 
defined on a probability 
space $(\Omega,\CF,\PP)$ such that $Y$ has 
distribution $\rho$ and $(X, Y )$ meets the 
constraints in $\Gamma$, that is, 
$\mu_{X,Y}\in \mathcal{S}_{\Gamma,\rho}$, 
see \eqref{eq:S-Gamma-rho}.
\end{definition}

Let us now consider the setting in 
which we encounter SPDEs in the variational framework.
Let $H$ be a separable Hilbert space with inner product
$\langle \cdot,\cdot \rangle$ and let $\hast$ denote its dual.
Consider $V$ as a reflexive Banach space embedded
continuously and densely into $H$. 
Through the Riesz isomorphism, which 
identifies $H$ with $\hast$, we establish the
Gelfand triple 
\begin{equation}\label{eq:Gelfand}
	\text{$V\subset H\equiv H'\subset V'$ 
	as continuous and dense}.
\end{equation}
The duality pairing between $\vast$ 
and $V$, defined as
$$
\mathbin{_{\vast}}\langle z, v \rangle_{V}
:=z(v), \quad 
z \in \vast, \,\, v \in V,
$$
satisfies
$$
\mathbin{_{\vast }}\langle z, v \rangle_{V}
 = \langle z, v \rangle_H, \quad 
z \in H, \,\, v \in V.
$$
In the following, we consider a filtered 
probability space $\MA$ as 
given by \eqref{probabilitysp}. 
Let $\bW$ be a cylindrical Wiener
process on a given Hilbert space $\CH$, expressed as
\begin{equation}\label{eq:wiener_represent}
	\bW(t) = \sum_{{k}=1}^\infty h_k\beta_k(t),\quad t\in[0,T],
\end{equation}
where $\left\{h_k \mid k\in\NN\right\}$ is an orthonormal basis in $\CH$
and $\left\{\beta_{k} \mid  k \in \NN\right\}$ is a sequence of mutually
independent Brownian motions defined over $\MA$.

Let $\eta$ be a time-homogeneous Poisson random
measure, with a given intensity measure $\nu$ defined 
on a given Polish space $(S,\CS)$. 
It is assumed that $\eta$ is independent of
the Wiener process $\bW$ and defined
over the same filtered probability space $\MA$.
See Definition \ref{def-PRM} for further details.

\medskip

Moving forward, we are presented with the 
following measurable mappings:
\begin{itemize}
	\item $b: [0,T] \times V \rightarrow \vast $, 
	mapping into the dual space of $V$;
	
	\item $\sigma: [0,T] \times H \rightarrow
	\mathcal{L}_{\text{HS}}(\CH, \vast )$, which 
	maps into the space of Hilbert-Schmidt 
	operators from $\CH$ to $\vast $;
	
	\item $c: [0,T] \times \{S_n\} \times V_d \rightarrow \vast$, 
	where the sets $\{S_n\}$ are defined 
	in Lemma \ref{measure_lemma}, 
	and $V_d\subset V'$ is dense.
\end{itemize}

Let $E_1$ and $E_2$ be Banach spaces, with 
$E_2 \hookrightarrow E_1$ and $V \hookrightarrow E_1$. 
Suppose $E_1$ is a UMD space of type 2, 
which implies that $E_1$ is also of martingale 
type 2 \cite{Hytonen:2016aa}.\footnote{If one deals 
only with a Poisson random measure, it is also possible 
to consider a UMD Banach space of type $p$, with $p \in [1,2]$, 
provided the small jumps are $p$-integrable. 
However, the stochastic It\^{o} integral with respect 
to a Wiener process is defined only 
on a UMD Banach space of type $2$.}
We consider general SPDEs of the form
\begin{equation} \label{SPDE0}
	\begin{split}
		dU(t) & = b(t,U)\, dt+ \sigma(t,U(t))\, dW(t) 
		+\int_S c(t,z,U(t))\, \tilde \eta(dz,dt),
		\\ U(0)&=U_0\in E_2,
	\end{split}
\end{equation}
where $b$, $\sigma$, and $c$ are the 
``coefficients" given above. We refer to a stochastic process 
$U: \Omega \times [0, T] \to E_1$ 
as a solution of \eqref{SPDE0} if the equation
\begin{equation}\label{SPDE}
	\begin{split}
		& \left \langle U(t),\varphi \right\rangle =
		\left\la U_0,\varphi\right\ra
		+\int_0^{t} \left\langle b(s,U(s)),\varphi \right\rangle\,ds
		\\ & \quad
		+\int_0^{t}\sum_{{k}=1}^\infty 
		\left \langle \sigma(s,U(s))[h_{k}],
		\varphi \right\rangle\, d\beta^{k}(s)
		+\int_0^{t}\int_S
		\left \langle c(s,z,U(s)),\varphi \right \rangle
		\, \tilde\eta(dz,ds),
	\end{split}
\end{equation}
is satisfied $\PP$-a.s., for each $t \in [0, T]$, 
and for each test function $\varphi \in V$.

We now formulate the above SPDE problem  
\eqref{SPDE0}, \eqref{SPDE} in an abstract setting. 
Before proceeding, let us recall that all 
random variables are defined over a filtered 
probability space $\MA$ (see \eqref{probabilitysp}). 
Let $Y$ represent the Wiener process $\bW$, the Poisson 
random measure $\nu$, and the given initial condition, 
such that $Y = (\bW, \nu, U_0)$. Furthermore, we 
introduce the following space linked to $Y$:
\begin{equation}\label{defB2}
	B_2= C_b([0,T]; \WienH) \times 
	M_{\NN}(\{S_n \times \INT\}) 
	\times E_2.
\end{equation}
At the same time, in the abstract formulation under 
consideration, let $X$ denote the solution variable $U$ 
belonging to the Skorohod space of c\'adl\'ag functions 
with values in $E_1$, which is denoted by
\begin{equation}\label{eq:B1-def}
	B_1 = \DD([0,T]; E_1).
\end{equation}

\begin{example}
To demonstrate the applicability of our results, we 
give an example of an SPDE 
that fits within our framework \eqref{SPDE0}, \eqref{SPDE}, 
along with a specification of the relevant spaces. 
Let\footnote{The dual is defined with respect to 
the Hilbert space $H_0^{1,2}(\CO)$.}
$$
V:=L^p(\CO)\subset H:=H_0^{1,2}(\CO)
\subset (L^p(\CO))'=:V'
$$
be a Gelfand triple equipped  
with the following scalar product
$$
{ }_{V^*}\la u,v\ra _V :=\int \la \nabla u(x),\nabla v(x)\ra \, dx 
\quad u,v\in H_0^{1,2}(\CO).
$$
Here, $\CO$ denotes a bounded open 
subset of $\RR^d$. 
Let $b:V\to V'$ be the porous medium 
operator defined by
$$
b(u):=\Delta \left(|u|^{p-2} u\right), 
\quad u\in L^p(\CO), \quad p\ge 2.
$$
Note that $b$ is hemicontinuous, locally 
monotone, coercive, and bounded; 
see, for example, \cite{Barbu:2016aa} 
or \cite[p.~87]{Liu:2015aa}.

Let $\CH = \RR^m$, and assume that 
$\{W_t\}_{t \geq 0}$ is an $m$-dimensional 
Wiener process on the filtered probability 
space \eqref{probabilitysp}, represented as in 
\eqref{eq:wiener_represent}. Additionally, let $\eta$ denote 
the space-time Poisson random measure constructed 
in Example \ref{stex}, with intensity measure 
$\nu_0((a,b))=\int_a^b|z|^{-\alpha}e^{-|z|}\, dz$, 
for $a, b \in \RR$ with $a < b$ and $a, b \neq 0$.

The example is now provided by the following SPDE:
\begin{equation*}
	d U(t) =b(U(t)) \, dt+\sigma (U(t)) \, dW(t) 
	+\int_{S} c(z,U(t)) \, \tilde{\eta}(dz,dt),
\end{equation*}
where $U_0$ is an $\CF_0$-measurable 
random variable and $\sigma(u)[h] := u h$ 
for all $u \in V$ and $h \in \CH$. 
Regarding the jump-noise amplitude $c$, we assume
$$
c: B^{-\frac{d}{2}}_{2,\infty}(\CO) \times H^{1,2}_0(\CO)
\ni (z, x) \mapsto x (I - \Delta)^{-\frac{d}{2}-2}z 
\in H^{1,2}_0(\CO),
$$
which implies that, in the abstract setup \eqref{SPDE0}, the 
sets $\{S_n\}$ have been replaced by 
$B^{-\frac{d}{2}}_{2,\infty}(\CO)$ (see Example \ref{stex}), and $V_d$ 
has been replaced by $H_0^{1,2}(\CO)$. 
As $H_0^{1,2}(\CO)\subset V'$, the noise operator 
$c$ maps into $V'$, satisfying the required condition. 
Moreover, note that the coefficients $b$, $\sigma$, and 
$c$ are assumed here to be independent of $t$.

Finally, to complete the identification with the abstract 
framework \eqref{SPDE0}, let $E_1$ be a Banach space such 
that $V’ \hookrightarrow E_1$ continuously; for example, 
$E_1 = H^{-\frac{d}{2}-1,2}(\CO)$, and define
\begin{equation*}
	B_1 = \DD([0,T]; E_1),
\end{equation*}
Setting $E_2 = H^{1,2}_0(\CO)$ (for example), 
we may define the space $B_2$ as
\begin{equation*}
	B_2 = C_b([0,T]; \RR^m) \times
	M_{\NN}\left(\{\RR \setminus 
	\left[-\tfrac{1}{n},\tfrac{1}{n}\right] 
	\times \CO \times [0,T]\}\right)\times H^{1,2}_0(\CO).
\end{equation*}
\end{example}

Now, one must construct a mapping $\Gamma: B_1 \times B_2 \to \RR$ 
such that the solution $X$ of the abstract equation \eqref{sol01} 
coincides with the solution of the SPDE \eqref{SPDE0}. 
In \eqref{sol01}, the solution is defined as a 
random variable that satisfies a constraint given by the 
mapping $\Gamma : B_1 \times B_2 \to \RR$. 
In our SPDE example, however, the solution is 
defined as a process that satisfies a collection 
of equations or constraints.

To be more precise, let $V_d := 
\left\{\varphi_\kk : \kk \in \NN \right\}$ 
be a dense countable subset of $V$ 
(see \eqref{eq:Gelfand}), and let 
$\QQ_T := \QQ \cap [0, T]$. Then, for each $\kk \in \NN$ 
and $t \in \QQ_T$, we define a constraint 
$\Gamma_{\varphi_\kk, t} : B_1 \times B_2 \to \RR$ 
based on \eqref{SPDE} with $\varphi = \varphi_\kk$. 
Within a given probability space, 
see \eqref{probabilitysp}, we define
\begin{equation}\label{gammaus}
	\Gamma =\left\{\Gamma_{\varphi,t}: 
	\varphi\in V_d, \,\, t\in \QQ_T\right\}
\end{equation}
by
\begin{equation}\label{abstracteqnew}
	\begin{split}
		\Gamma_{\varphi,t} \bigl(U,(\bW,\eta,U_0)\bigr)
		& = \langle U_0,\varphi\rangle
		+ \int_0^{t}\left\langle b(s,U(s)),
		\varphi\right\rangle\,ds
		\\ &\qquad
		+\int_0^{t}\sum_{{k}=1}^\infty
		\left\langle\sigma(s,U(s))[h_{k}],
		\varphi\right \rangle\, d\beta_{k}(s)
		\\ & \qquad
		+\int_0^{t}\int_S
		\left \langle c(s,x,U(s)),
		\varphi \right \rangle
		\, \tilde\eta(dx,ds)
		\\ & \qquad 
		- \langle U(t),\varphi\rangle.
	\end{split}
\end{equation}
Omitting the probability variable, we write 
$\Gamma_{\varphi, t}(x, y)$, where $\Gamma_\varphi: 
B_1 \times B_2 \to \mathbb{R}$ is defined in \eqref{abstracteqnew}, 
and $B_1$ and $B_2$ are defined in \eqref{eq:B1-def} and \eqref{defB2}, 
respectively. Here, $x$ resides in $B_1$, symbolizing the 
variable $U$, and $y$, located in $B_2$, represents the 
input triplet $(\bW, \eta, U_0)$. Whenever we want to 
emphasize the dependence on $t$, we write 
$\Gamma_\varphi(x, y)(t)$ instead 
of $\Gamma_{\varphi, t}(x, y)$.

\begin{remark}\label{sufficent}
We will consider probabilistic weak solutions, also known 
as martingale solutions. Typically, the probability space associated 
with these solutions does not coincide with the probability space 
fixed in \eqref{probabilitysp}; instead, the focus is primarily 
on the solution measure. To formulate the problem, 
it is sufficient to specify:
\begin{itemize}
	\item the Hilbert space $\CH$, where the cylindrical 
	Wiener process is defined,

	\item the Polish space $(S, \CS)$ (along with a 
	sequence $\{S_n\}$ of sets) and the intensity measure $\nu$ 
	defined on $(S, \CS)$, which characterizes the L{\'e}vy process,
	
	\item the distribution $\rho_0$ defined on $E_2$, 
	characterizing the initial condition.
\end{itemize}
Additionally, the coefficients $b$, $\sigma$, and $c$ 
of the SPDE \eqref{SPDE0} are required. 
From $\CH$, $\nu$, and $\rho_0$, one can 
construct a filtered probability space $(\Omega, \CF, \BF, \PP)$ 
with a cylindrical Wiener process $\bW$ on $\CH$, a Poisson random 
measure $\eta$ on $S$, and an initial condition $U_0$ 
(with law $\rho_0$). Here, $U_0$ is 
$\CF_0$-measurable, and $\eta$ 
and $\bW$ are independent, both 
adapted to the filtration 
$(\CF_t)_{t \in [0,T]}$.
\end{remark}

After introducing the problem, we proceed to define the 
concept of a solution as it will be applied in the subsequent 
sections. Occasionally, we need to assume 
additional regularity properties that, 
while not part of the formal 
definition of the solution, are crucial 
for ensuring pathwise uniqueness. 
These regularity properties can be introduced 
through additional mappings:
$$
\theta^{\alpha_i}_i:\DD([0, T]; E_1) \to \RR,
\quad \alpha_i \in A_i, \quad i = 0, 1,
$$
where $A_i$, $i = 0, 1$, are two index sets. 
We define
\begin{equation}\label{calX}
	\begin{split}
		\mathcal{X} := \Bigl\{
		U\in B_1 & \mid  \EE \, \theta_0^{\alpha_0}(U)\le R,
		\,\, 
		\PP\bigl( \theta_1^{\alpha_1}(U)<\infty\bigr)=1, 
		\\ & \qquad 
		\mbox{for all $\alpha_i\in A_i$, $i=0,1$} \Bigr\},
	\end{split}
\end{equation}
for some given $R>0$.
The set $\mathcal{X}$ allows us to 
incorporate this additional information.

\begin{remark}
To demonstrate how additional regularity assumptions 
can be applied to the solution $U$ through the mappings 
$\theta^{\alpha_0}_0$ and $\theta^{\alpha_1}_1$, 
let us consider an example. Specifically, 
define the first functional 
$\theta_0^{\alpha_0}(U)$ as
$$
\theta_0^{\alpha_0}(U)
:=\norm{U}_{L^2(0, T; V)}^2, 
\quad \forall \alpha_0.
$$
Imposing the condition $\EE\,\theta_0^{\alpha_0}(U) \leq R$, 
for some constant $R$, encodes a boundedness 
constraint on $U$. This approach facilitates the enforcement 
of regularity and additional bounds on the solutions, 
extending beyond the requirements of the solution concept 
but necessary for a well-posedness analysis.

Furthermore, non-negativity constraints can be incorporated 
using the second functional by defining, e.g.,
$$
\theta_1^{\alpha_1}(U(t,x))
:=
\begin{cases}
	\infty & \mbox{ if $U(t,x)<0$},
	\\ 
	0 & \mbox{elsewhere}.
\end{cases}
$$
We then require that the probability of the event 
$\theta_1^{\alpha_1}(U) <\infty $ is equal to one.
In this manner, if, for instance, the solution space 
$E_1 = L^2(\CO)$ in \eqref{SPDE0} is used, 
where $\CO\subset \RR^d$ is bounded 
open, the set in \eqref{calX} 
transforms into the measurable set:
\begin{align*}
	\mathcal{X} := 
	\Bigl\{ U & :[0,T]\to L^2(\CO) \mid 
	\EE\norm{U}_{L^2(0,T;L^2(\CO))}^2\le R,
	\\ & \qquad 
	\PP\left(\left\{\Leb\left(
	\bigl\{(t,x):U(t,x)< 0\bigr\}\right)
	=0\right\}\right)=1
	\Bigr\}.
\end{align*}
\end{remark}

We now present the precise definition of a solution within Kurtz’s framework. 
We consider a Gelfand triple $(V,H,V')$, see \eqref{eq:Gelfand}, 
along with Banach spaces $E_1$ and $E_2$. Here, $E_2$ is 
continuously embedded into $E_1$, and $V$ is 
continuously embedded into $E_1$.  
Additionally, $E_1$ is a UMD space of type 2.

\begin{definition}\label{def_solution}
Given a Hilbert space $\CH$, an
intensity measure $\nu$ over a Polish space $(S,\CS)$, 
see \eqref{eq:intensity0} and \eqref{eq:intensity-measure}, 
and a distribution $\rho_0$ on $E_2$, we consider 
a tuple $(\MA, U, \bW, \eta,U_0)$ to 
be a probabilistic weak  solution of the 
SPDE \eqref{SPDE0}, under the following conditions: 
The tuple consists of:
\begin{enumerate}
	\item[(i)] A filtered probability space $\MA = (\Omega, \CF, \BF, \PP)$,
	where $\BF = (\CF_t)_{t \in \INT}$ denotes the filtration.
	
	\item[(ii)] A cylindrical Wiener process $\bW$ on $\WienH$,
	defined over $\MA$ and adhering to the
	representation \eqref{eq:wiener_represent}.
	
	\item[(iii)] A time-homogeneous Poisson random measure
	$\eta$ on $(S, \CS)$, with intensity 
	measure $\nu$, defined over $\MA$.
	
	\item[(iv)] An initial condition $U_0$, which 
	is an $E_2$-valued random variable over $\MA$ 
	(with law $\rho_0$) and is $\CF_0$-measurable.
	
	\item[(v)] A process $U$ on $[0, T]$, which is $\BF$-progressively 
	measurable in $H$ and exhibits c\`adl\`ag behavior in $E_1$.
\end{enumerate}
This setup satisfies the following conditions:
\begin{enumerate}
	\item[(vi)] For all $t \in [0,T]$, $\PP(U(t) \in E_1) = 1$,
	and $U \in \mathcal{X}$---see \eqref{calX}.

	\item[(vii)] The integrals
	\begin{equation}\label{finiteintegrals}
		\begin{split}
			& \int_0^{t} \left|\left\langle b(s,U(s)),\varphi
			\right\rangle\right|\,ds
			+\int_0^{t} \sum_{{k}=1}^\infty
			\left| \left\langle \sigma(s,U(s))[h_{k}],
			\varphi\right \rangle\right|^2\, ds
			\\ & \quad
			+ \int_0^{t}\int_{\{x \in S \mid
			 \left|\left\langle c(s,x,U(s)),
			\varphi \right\rangle\right|_{\vast } < 1\}}
			\left|\left\langle c(s,x,U(s)),
			\varphi\right\rangle \right|^p \,\nu(dx)\,ds
			\\ & \quad
			+\int_0^{t}\int_{\{x \in S \mid \left|\left\langle {c(s,x,U(s))},
			\varphi \right \rangle \right|_{\vast } \ge 1\}}
			\left| \left\langle {c(s,x,U(s))},
			\varphi \right\rangle \right| \,\nu(dx)\,ds
		\end{split}
	\end{equation}
	\vspace*{0.1cm}
	
	\noindent are finite, $\PP$-a.s., for every $t \in [0,T]$
	and every $\varphi \in V$.
	
	\item[(viii)] The process $U$ satisfies 
	\eqref{SPDE}, $\PP$-a.s., $\forall t \in [0,T]$ 
	and $\forall \varphi \in V$.
\end{enumerate}
\end{definition}

The typical well-posedness approach begins with establishing 
the existence of a martingale solution. 
Once existence is ensured, the focus shifts 
to proving the uniqueness of the solution. 
A key challenge in this step is that the concept of solution 
depends on the chosen definition of the stochastic integral. 
The choice of integral---whether It{\^o}, 
Stratonovich, or Marcus---can result 
in different solution processes, depending on the 
noise coefficients $\sigma$ and $c$. 
Therefore, before addressing 
uniqueness, one must specify the type 
of stochastic integral being used.
In the context of It{\^o} calculus, especially dealing 
with L\'evy processes,  concepts such as 
adaptivity, progressively measurability, and predictability 
must be considered to properly define 
the solution process. For details, see, e.g., 
\cite[Chapter]{Revuz:1999wi}.

In the framework of the Itô integral, an essential condition 
is that the increments $\bW(s) - \bW(t)$, 
where $s > t \geq 0$, are independent of the solution process 
$U(t)$. The same requirement applies to the increments of the 
Poisson process. To address this, the following 
definition is crucial:

\begin{definition}\label{def:information}
Let $\bW\in C_b([0,\infty);\WienH)$ and
$\eta\in{M_{{\Bbb N}}(\{S_n \times\INT\})}$  be
the Wiener process and the Poisson random
measure introduced before. Then we define
\begin{equation}\label{eq:wt}
	\begin{split}
		&\mathscr{W}_t(h) =
		\sigma\left(\left\{\left\langle  \bW(s),
		h\right\rangle : 0\le s\le t\right\}\right),
		\quad h \in \CH,
		\\ &
		\mathscr{W}^t(h)=\sigma\left(\left\{
		\left\langle \bW(s)-\bW(t),
		h\right \rangle: t\le s\le T\right\}\right),
		\quad h \in \CH,
	\end{split}		
\end{equation}
and
\begin{equation}\label{eq:etat}
	\begin{split}
		\eta_t(V) &=\eta(V\cap(S\times[0,t] )),
		\quad V\in \mathcal{S}\otimes\mathscr{B}(\INT).
		\\
		\eta^t(V) & =\eta(V\cap (S\times (t,T])),
		\quad V\in \mathcal{S}\otimes\mathscr{B}(\INT ).
	\end{split}
\end{equation}
\end{definition}
The proof of the next lemma is straightforward 
and will, therefore, be omitted.

\begin{lemma}\label{adapttimhom}
If $\bW$ is a Wiener process and $\eta$
is a time-homogeneous Poisson random measure
over a filtered probability space $\MA$ \eqref{probabilitysp}. 
Then, for every $t\in \INT $, $\mathscr{W}_t(h)$ 
and $\eta_t(V)$ are $\mathcal F_t$-measurable
random variables. In addition, $\mathscr{W}^t(h)$ and
$\eta^t(V)$ are independent of $\mathcal F_t$.
\end{lemma}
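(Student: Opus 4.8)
The plan is to verify the two measurability claims and the two independence claims separately, in each case unwinding the definitions in Definition~\ref{def:information} and invoking the defining properties of $\bW$ and $\eta$ from Section~\ref{defcyw} and Definition~\ref{def-PRM}.

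First I would handle $\mathscr{W}_t(h)$. By \eqref{eq:wiener_represent}, for fixed $h\in\CH$ the scalar process $s\mapsto\langle\bW(s),h\rangle$ is a (one-dimensional) continuous martingale, hence $\BF$-adapted; consequently each $\langle\bW(s),h\rangle$ with $0\le s\le t$ is $\CF_s\subset\CF_t$-measurable, and therefore the $\sigma$-algebra $\mathscr{W}_t(h)$ generated by the whole family $\{\langle\bW(s),h\rangle:0\le s\le t\}$ is contained in $\CF_t$; in particular $\mathscr{W}_t(h)$ is $\CF_t$-measurable. For $\eta_t(V)$ with $V\in\CS\otimes\mathscr{B}(\INT)$: by definition $\eta_t(V)=\eta(V\cap(S\times[0,t]))$. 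For rectangles $V=U\times(a,b]$ with $U\in\CS$ and $(a,b]\subset[0,t]$, one has $\eta_t(V)=N(b,U)-N(a,U)$, which is $\CF_b\subset\CF_t$-measurable by Definition~\ref{def-PRM}(iii). A routine $\pi$--$\lambda$ (Dynkin) argument, using that such rectangles generate $\CS\otimes\mathscr{B}([0,t])$ and that the class of sets $V$ for which $\eta_t(V)$ is $\CF_t$-measurable is closed under the relevant operations (finite disjoint unions via independent scattering/additivity, monotone limits), extends this to all $V$; one may also need to pass through $S_n\uparrow S$ so that only finite-valued increments are involved before taking the limit.

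Next I would treat the independence statements. For $\mathscr{W}^t(h)$: since $\bW$ is a cylindrical Wiener process with independent Brownian components $\beta_k$ over $\MA$, the increment process $\{\bW(s)-\bW(t):t\le s\le T\}$ has increments independent of $\CF_t$ (each $\langle\bW(s)-\bW(t),h\rangle$ is a sum of increments $\beta_k(s)-\beta_k(t)$ which are independent of $\CF_t$), and hence the generated $\sigma$-algebra $\mathscr{W}^t(h)$ is independent of $\CF_t$. For $\eta^t(V)=\eta(V\cap(S\times(t,T]))$: on rectangles $U\times(a,b]$ with $t\le a<b\le T$ one has $\eta^t(U\times(a,b])=N(b,U)-N(a,U)$, which is independent of $\CF_a\supset\CF_t$, hence independent of $\CF_t$, by Definition~\ref{def-PRM}(iii); a Dynkin-class argument again promotes this to all $V\in\CS\otimes\mathscr{B}((t,T])$, and more care is needed to conclude joint independence of the whole family (equivalently, independence of the generated $\sigma$-algebra) rather than independence of individual variables—this follows because finite-dimensional distributions of $\eta$ restricted to disjoint time-slabs are independent by independent scattering, so the generating $\pi$-system of cylinder events is independent of $\CF_t$.

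The only genuinely delicate point—and the reason the paper calls the proof ``straightforward'' and omits it—is the bookkeeping in the Dynkin-class arguments when $\nu$ is merely $\sigma$-finite: one must restrict to $V\subset S_n\times\INT$ (so that $\eta(V)<\infty$ a.s.\ and the Poisson variables are honest finite random variables) before applying closure under proper differences and monotone limits, then let $n\to\infty$; and to get independence of the \emph{$\sigma$-algebras} (not just finite families of events) one invokes the standard fact that two $\sigma$-algebras are independent once a generating $\pi$-system of each is, applied to the $\pi$-system of finite intersections of events $\{\eta^t(V_j)\in B_j\}$ versus a generating $\pi$-system of $\CF_t$. I expect no obstacle beyond this routine measure-theoretic care, which is why stating the plan suffices.
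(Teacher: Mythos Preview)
Your proposal is correct and follows exactly the standard route one would expect: adaptedness from the defining properties of $\bW$ and from Definition~\ref{def-PRM}(iii), independence of future increments from the same sources, and Dynkin/$\pi$--$\lambda$ arguments (with the $S_n$ truncation for $\sigma$-finiteness) to pass from rectangles to general $V$. The paper itself omits the proof entirely, declaring it ``straightforward'', so there is no alternative argument to compare against; your sketch is precisely the routine verification the authors have in mind.
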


In the context of SPDEs, incorporating time into the 
Kurtz framework requires extending the probability space 
framework to include a filtration. 
We work on a filtered probability space 
$\MA=(\Omega, \CF, \BF, \PP)$, where 
$\BF = (\CF_t)_{t \in [0, T]}$ 
represents the filtration (see \eqref{probabilitysp}). 
Specifically, the solution process is 
typically progressively measurable 
with respect to the filtration generated 
by both the Wiener process 
and the Poisson random measure. 
To handle this additional complexity, Kurtz introduced 
the concept of \textit{compatibility}. Before defining this concept, however, 
it is necessary to present some additional definitions.

In our setting, the initial condition $U_0$ and 
the processes $\bW$ and $\eta$ are given, 
where $\bW$ is $C_b([0,T];\mathcal{H})$-valued and $\eta$ 
is $M(\{S_n\times [0,T]\})$-valued. The initial condition $U_0$ is 
assumed to be $\CF_0$-measurable. The processes $\bW$ 
and $\eta$ naturally generate a filtration 
$(\CF^Y_t)_{t \in [0,T]}$, where $Y = (\bW, \eta,U_0)$, on the 
underlying probability space $\Omega$.  
In the following definition, we introduce what 
is known as the induced filtration on 
$\mathbb{D}([0,T];E)$, where $E$ is a Banach space.

\begin{definition}\label{def38}
Let $Z$ be a $\DD([0,T];E)$-valued 
random variable on a probability space 
$(\Omega,\mathcal{F}, \PP)$. 
Denote by $Z_t: \DD([0,T]; E) \to  E$ the 
evaluation map $Z\mapsto Z_t$. For $t \in [0,T]$, 
define $\mathcal{B}_t^Z = \sigma(\{Z_s:s \leq t\})$ 
as the $\sigma$-algebra on $\DD([0,T];E)$
generated by the values of $Z$ up to time $t$.
Now, let us introduce the $\sigma$-algebra of 
the preimages of $\mathcal{B}_t^Z$ on $\Omega$. 
For any $t \in [0, T]$, let $\SF_t^{Z}$ be the 
coarsest $\sigma$-algebra with respect 
to which the mapping
$$
Z:(\Omega,\SF_t^{Z}) \to 
(Z_t,\mathcal{B}_t^Z)
$$
is measurable. We refer to the filtration 
$(\SF_t^{Z})_{t \in [0, T]}$ 
as the filtration induced by the random variable $Z$ on 
the probability space $(\Omega, \mathcal{F}, \PP)$.
\end{definition}

\begin{remark}
A $\sigma$-algebra is said 
to be generated by a family of sets if the family of sets 
and the $\sigma$-algebra are part of the same Borel $\sigma$-algebra. 
In the definition above, $\{Z_s : s \leq t\}$ and $\mathcal{B}_t^Z$ 
belong to $\mathscr{B}(\DD([0,T];E))$.

Let $X_1$ and $X_2$ be Banach spaces, 
and let $f: X_1 \to X_2$ be a map.  A $\sigma$-algebra 
is said to be induced by $f$ if the family of preimages 
$\{f^{-1}(A) : A \in \mathscr{B}(X_2)\}$ generates 
a $\sigma$-algebra on $X_1$. Here, $f^{-1}(A)$ denotes 
the set $\{x \in X_1 : f(x) \in A\}$. 
In the context of Definition \ref{def38}, the random 
variable $Z$ serves as the mapping $f$.
\end{remark}

Define $Y = (\bW,\eta,U_0)$, and let 
$(\SF^{Y}_t)_{t \in [0, T]}$ be the filtration 
induced by $Y$ on the underlying probability space $\Omega$. 
Let $X$ represent a solution to the SPDE \eqref{SPDE0}, 
taking values in $\DD([0,T]; E_1)$. 
Due to the definition of the It{\^o} integral, 
the solution process $X$ must be progressively 
measurable with respect to the induced filtration 
$(\SF^{Y}_t)_{t \in [0, T]}$. 
Furthermore, since the system is autonomous, meaning 
that the only external influences come from the processes 
$\bW$, $\eta$, and the initial data $U_0$, 
the information contained 
in $(\SF^{Y}_t)_{t \in [0, T]}$ is sufficient to 
determine the process $X$ almost surely 
with respect to $\PP$. In particular, for any $t \in [0, T]$ and 
any bounded Borel measurable function 
$h:\DD([0, T]; E_1)\to \RR$, 
the following identity 
holds:\footnote{For two $\sigma$-algebras 
$\mathscr{G}_1$ and $\mathscr{G}_2$, the 
$\sigma$-algebra $\mathscr{G}_1 \lor \mathscr{G}_2$ 
is defined as $\sigma(\mathscr{G}_1
\cup \mathscr{G}_2)$. 
We have $\SF^{(X,Y)}_t = \SF^{X}_t \vee \SF^{Y}_t$, where 
$F^{Z}_t$, with $Z = (X,Y)$, $Z = X$, or $Z = Y$, are the 
induced filtrations (see Definition \ref{def38}).}
$$
\EE\left[ h(Y)\mid \SF^{(X,Y)}_t\right]
=\EE \left[ h(Y)\mid \SF^{Y}_t\right],
$$
recalling that a strong solution $X$ can be represented 
as $F(Y)$ for some function $F$ (see Definition \ref{strongsol}). 
This motivates the definition of \textit{temporal 
compatibility}, as given in Definition 2.1 of \cite{Kurtz:2014aa}, 
which is presented here for generic processes and is not 
specific to the $X$ and $Y$ associated 
with the above SPDE.

\begin{definition}\label{jcomp}
Let $(E_1,\mathcal{E}_1)$ and $(E_2,\mathcal{E}_2)$ 
be two Polish spaces, and let $X$ and $Y$ be defined on 
a probability space $(\Omega,\CF,\PP)$, taking values in 
$B_1:=\DD([0,T];E_1)$ and $B_2:=\DD([0,T];E_2)$, respectively. 
We say that the process $X$ is temporally compatible 
with $Y$,  if for every bounded $h\in \mathscr{B}(B_2)$ 
and for all times $t \in [0,T]$, the following equality 
holds almost surely:
$$
\EE \left[ h(Y)\mid 
\SF_t^{(X,Y)}
\del{\vee\SF_t^{Y}}\right]
=\EE \left[ h(Y) \mid 
\SF_t^{Y} \right],
$$
where, as before, $\bigl(\SF_t^{(X,Y)}\bigr)_{t\in[0,T]}$ 
and $\bigl(\SF_t^{Y}\bigr)_{t\in[0,T]}$ denote the 
filtrations induced by $(X,Y)$ and $Y$, respectively 
(see also Definition \ref{def38}).

We say that a \textit{probability measure 
$\mu$ is temporally compatible} if, for any pair of 
random variables $X$ and $Y$ with joint law $\mu$, the 
variable $X$ is temporally compatible with $Y$.
\end{definition}

\begin{remark}
If $X$ is temporally compatible 
with $Y$, then all relevant 
information is provided by the $\sigma$-algebra 
$(\SF_t^{Y})_{t\in [0,T]}$. In other words, 
once at time $t\in[0,T]$ $\SF_t^{Y}$ is known, 
knowing $\SF_t^{X}$ does not provide any additional 
information for calculating the expectation of $h(Y)$. 
Alternativly, at every time $t\in[0,T]$, knowing the history 
of both $X$ and $Y$ together 
(via $\SF_t^{(X ,Y)}$) 
does not improve our ability to predict $h(Y)$ 
beyond just knowing the history of $Y$ 
alone (via $\SF_t^{Y}$). 
If $Y$ has independent increments, then 
$X$ is temporally compatible with $Y$ if 
$Y(t + \cdot)-Y(t)$ is independent of 
$\CF^{(X,Y)}_{t}$ for all $t \in [0,T]$, 
see \cite[Lemma 3.2]{Kurtz:2007aa}.
\end{remark}

\begin{remark}
If the Stratonovich integral is considered, the solution $U$ 
of an SPDE at time $t \in [0,T]$ inherently depends on the future 
behavior of the process $t \mapsto \mathbb{W}_t$. Consequently, 
the solution to an SPDE interpreted in the Stratonovich sense 
does not satisfy the compatibility condition. Only the It\^{o} 
integral satisfies the compatibility condition described above. 
However, in practical applications, it is often possible to convert 
a Stratonovich-driven SPDE into an Itô-driven SPDE, allowing 
the above concept to be applied.
\end{remark}

\begin{remark}\label{rem:SGRC}
Recall that $\mathcal{S}_{\Gamma,\rho}$, 
which is defined in \eqref{eq:S-Gamma-rho}, 
represents the set of all 
joint solution measures $\mu_{X,Y} 
\in \mathcal{P}(B_1 \times B_2)$ that satisfy 
the constraint \eqref{solma} and for 
which $\mu(B_1 \times \cdot) = \rho$ ($\rho$ is 
the distribution of the input data $Y$). 
Since  temporal  compatibility is 
an additional property of the solution that must 
be satisfied, we introduce the so-called \textit{Kurtz set} 
$\CS_{\Gamma,\rho,\CT}$:
\begin{equation}\label{eq:S-Gamma-rho-C}
	\begin{split}
		&\text{$\CS_{\Gamma,\rho,\CT}$ 
		is the set of joint solution measures 
		$\mu_{X,Y}\in \CS_{\Gamma,\rho}$}
		\\ & 
		\text{that are temporally compatible 
		(see Definition \ref{jcomp}).}
	\end{split}
\end{equation}
\end{remark}

We have defined the set of solutions in \eqref{gammaus} 
using a countable dense set $\mathbb{Q}$ of 
times $t\in[0,T]$ and a countable dense set of test 
functions $\varphi$. Similarly, the concept of 
compatibility can be extended to a countable 
set or a sequence of random variables, as detailed in the 
following lemma. The proof of this lemma is 
similar to \cite[Lemma 5.7]{Bouard:2019ab} 
and is therefore not provided here.

\begin{lemma}\label{anothercomp}
Consider a cylindrical Wiener process $\bW$
and a Poisson random measure $\eta$
with intensity measure $\nu$, both defined 
over a probability space $(\Omega,\CF,\PP)$ as before.
Suppose $\bW$ and $\eta$ are adapted to a 
filtration $(\CF_t)_{t\in\INT}$ and let 
$U_0$ be an $\CF_0$-measurable 
$E_2$-valued random variable.  Then a sequence of 
$\DD([0,T];E_1)$-valued random variables 
$\left\{X_j: j\in\NN\right\}$ is 
temporally compatible with $Y=(\bW,\eta,U_0)$ 
if and only if
$$
\SF^{X_{I_1}}_t\lor\dots\lor
\SF^{X_{I_{{l}}}}_t\lor
\SF^{Y}_t
\,\,
\text{is $\PP$-independent 
of $\sigma(\mathscr{W}^t)$ 
and $\sigma(\eta^t)$},
$$
where $\mathscr{W}^t$ and $\eta^t$ 
are defined in \eqref{eq:wt} 
and \eqref{eq:etat}, respectively. 
Here, the notation $I_1, I_2, \dots, I_l$ refers to 
elements of an arbitrary index set $I$ from 
the power set of $\NN$. 
\end{lemma}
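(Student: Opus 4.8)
The plan is to reduce the statement to the single-variable compatibility characterization of \cite[Lemma 3.2]{Kurtz:2007aa} (the one invoked in the remark after Definition~\ref{jcomp}), applied to the process $Y=(\bW,\eta,U_0)$, which has independent increments. Recall that by that lemma, the sequence $\{X_j\}$ is temporally compatible with $Y$ if and only if, for every $t\in[0,T]$, the increment $Y(t+\cdot)-Y(t)$ is $\PP$-independent of $\SF^{(X_{I_1},\dots,X_{I_l},Y)}_t=\SF^{X_{I_1}}_t\lor\cdots\lor\SF^{X_{I_l}}_t\lor\SF^{Y}_t$, uniformly over finite subfamilies. So the whole content is to identify the $\sigma$-algebra generated by the increments $Y(t+\cdot)-Y(t)$ with $\sigma(\mathscr{W}^t)\lor\sigma(\eta^t)$ (together with the trivial increment of $U_0$, which contributes nothing after time $0$), and then note that independence of a join from a product-type $\sigma$-algebra is equivalent to independence from each factor when the factors themselves are independent --- which holds here because $\bW$ and $\eta$ are assumed independent.

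The steps, in order, would be: (1) Unwind the definition: $Y=(\bW,\eta,U_0)$ lives in $B_2=C_b([0,T];\WienH)\times M_\NN(\{S_n\times\INT\})\times E_2$, and $\SF^Y_t$ is the induced filtration from Definition~\ref{def38}; since $U_0$ is $\CF_0$-measurable and constant in time, the future-increment $\sigma$-algebra of $Y$ past time $t$ is exactly $\sigma(\mathscr{W}^t)\lor\sigma(\eta^t)$, where $\mathscr{W}^t$ and $\eta^t$ are as in \eqref{eq:wt}, \eqref{eq:etat}. (2) Observe that $\sigma(\mathscr{W}^t)$ and $\sigma(\eta^t)$ are $\PP$-independent, since they are generated by $\bW$ and $\eta$ respectively, and these are independent by hypothesis. (3) Invoke \cite[Lemma 3.2]{Kurtz:2007aa}: temporal compatibility of $\{X_j\}$ with $Y$ is equivalent to $\SF^{X_{I_1}}_t\lor\cdots\lor\SF^{X_{I_l}}_t\lor\SF^{Y}_t$ being independent of the future-increment $\sigma$-algebra of $Y$, i.e.\ of $\sigma(\mathscr{W}^t)\lor\sigma(\eta^t)$, for all $t$ and all finite subfamilies. (4) Use the elementary fact that a $\sigma$-algebra $\mathscr{G}$ is independent of $\sigma(\mathscr{W}^t)\lor\sigma(\eta^t)$ if and only if it is independent of $\sigma(\mathscr{W}^t)$ and of $\sigma(\eta^t)$ separately --- the ``only if'' is trivial, and the ``if'' follows from a $\pi$-$\lambda$ argument on products of generating sets together with the mutual independence established in step (2), and the fact that $\mathscr{W}^t$, $\eta^t$ are independent of $\CF_t\supseteq\SF^Y_t$ by Lemma~\ref{adapttimhom}, so the pasting is consistent. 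This yields the stated biconditional.

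The main obstacle, I expect, is step (1)--(3) bookkeeping: one must be careful that \cite[Lemma 3.2]{Kurtz:2007aa} is stated for a single process $X$, so extending it to a countable family requires precisely the reduction to arbitrary finite joins $\SF^{X_{I_1}}_t\lor\cdots\lor\SF^{X_{I_l}}_t$ --- this is where the hypothesis that the statement quantifies over all finite index tuples $I_1,\dots,I_l$ from the power set of $\NN$ is used, and it is exactly the point handled in \cite[Lemma 5.7]{Bouard:2019ab}, whose proof carries over verbatim with $\eta^t$ replaced by the pair $(\mathscr{W}^t,\eta^t)$. A secondary technical point is checking that adding the Wiener component does not break the argument: this is precisely where independence of $\bW$ and $\eta$ (step 2) and the measurability/independence statements of Lemma~\ref{adapttimhom} are needed, so that $\sigma(\mathscr{W}^t)\lor\sigma(\eta^t)$ genuinely decouples into its two factors. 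Since all of these ingredients are either hypotheses or already-established lemmas, the proof is essentially a transcription of \cite[Lemma 5.7]{Bouard:2019ab}, which is why it is omitted in the text.
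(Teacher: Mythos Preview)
Your proposal is correct and matches the paper's approach: the paper explicitly states that the proof is similar to \cite[Lemma 5.7]{Bouard:2019ab} and omits it, and you have correctly identified that the only modification is the incorporation of the Wiener increments $\mathscr{W}^t$ alongside $\eta^t$, handled via the independence of $\bW$ and $\eta$ and the measurability facts of Lemma~\ref{adapttimhom}. Your reduction to \cite[Lemma 3.2]{Kurtz:2007aa} and the $\pi$-$\lambda$ splitting in step (4) are exactly the ingredients one expects in transcribing that argument.
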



\section{Yamada-Watanabe uniqueness theory}
\label{sec:uniqueness}

In Section \ref{kframework}, we translated the 
abstract framework of Kurtz \cite{Kurtz:2014aa,Kurtz:2007aa} 
into the setting of SPDEs in variational form.
Now, we turn our attention to  
uniqueness of solutions. For SPDEs, one 
distinguishes between three types of uniqueness: 
\textit{(i) Uniqueness in law} 
means that if two solutions start from initial 
conditions with the same distribution, then the resulting 
solution processes will have the same distribution. 
\textit{(ii) Pathwise uniqueness} asserts that if 
two solutions are given on the same (but arbitrary) 
filtered probability space and start from 
the same initial condition, then
the two solutions are indistinguishable.
\textit{(iii) Strong uniqueness} states that, given a 
filtered probability space \eqref{probabilitysp} where 
the Wiener process, the Poisson random measure, 
and the initial condition are all defined, any 
solution is almost surely unique.

We will establish a key technical result 
(Lemma \ref{lem:transequal}), which asserts that if 
a process shares the same law as a variational 
solution---whether strong or weak, as per 
Definitions \ref{strongsol} and \ref{weaksol}---then 
this process also qualifies as a variational solution.
With the help of Lemma \ref{lem:transequal} 
and Kurtz's generalization of the 
Yamada-Watanabe theorem (see the upcoming 
Theorem \ref{theorem_abs}), we will obtain 
our primary uniqueness result 
(Theorem \ref{theoremuniquness}).

In the following two definitions, 
pathwise (pointwise) uniqueness and 
uniqueness in law (distribution) 
are formulated in the abstract setting 
(cf.~\cite[Definition 1.4]{Kurtz:2014aa}.

\begin{definition}\label{pointu}
Pathwise uniqueness is said to hold for 
the abstract equation \eqref{sol01} if, for any 
processes $X_1$, $X_2$, and $Y$ defined on the 
same probability space $(\Omega, \CF, \PP)$ and 
associated with the joint measures $\mu_{X_1,Y}$ 
and $\mu_{X_2,Y} \in \CS_{\Gamma, \rho, \CT}$ 
(cf.~Remark \ref{rem:SGRC}), 
respectively, it holds that
$$
\PP\left(\left\{X_1=X_2\right\}\right)=1.
$$
\end{definition}

\begin{definition}\label{pointu02}
Joint uniqueness in law (or weak joint uniqueness) 
is said to hold for the abstract 
equation \eqref{sol01} if $\CS_{\Gamma,\rho,\CT}$ 
contains at most one measure. Uniqueness in law (or weak 
uniqueness) holds if all solution measures 
$\mu\in \CS_{\Gamma,\rho,\CT}$ have the same marginal 
distribution on $B_1$ (cf.~Remark \ref{rem:SGRC}).
\end{definition}

In our context, uniqueness as described 
in Definition \ref{pointu} aligns with 
the standard notion of pathwise 
uniqueness defined below.

\begin{definition}\label{pathwiseu}
Whenever $\bigl(\MA,U^{(i)},\bW,\eta,U_0^{(i)}\bigr)$,
$i=1,2$, are two solutions to the SPDE \eqref{SPDE0} that
adheres to Definition \ref{def_solution}
and Assumption \ref{hyp_solution},
such that $\bW$ is a cylindrical Wiener process
evolving over $\WienH$, $\nu$ is the 
intensity measure of $\eta$,
$\rho_0$ is the law of $U_0^{(i)}$, $i=1,2$, and
$$
\PP\left(\left\{U^{(1)}(0)=U^{(2)}(0)\right\}\right)=1,
$$
then it holds that
$$
\PP\left(\left\{U^{(1)}(t)
=U^{(2)}(t)\right\}\right)=1,
\quad \forall t\in[0,T].
$$
\end{definition}

Often, one is only able in a first step to prove the 
existence of a probabilistic weak solution in the sense 
of Definition \ref{def_solution}, and, in a second  step 
is to verify pathwise uniqueness.  
However, pathwise uniqueness 
is often only achievable under additional 
regularity conditions on the solution process. 
These additional regularity properties are not 
inherent to the definition of a solution but 
are needed for proving uniqueness. 
They are introduced through two abstract mappings, 
$\theta^{\alpha_0}_0$ and $\theta^{\alpha_1}_1$, 
where $\alpha_0$ and $\alpha_1$ belong to some index sets. 
See the discussion leading to \eqref{calX}.

\begin{hypo}\label{hyp_solution}
Let $\bigl\{\theta^{\alpha_i}_i:\DD([0,T],E_1) 
\to [0,\infty] \mid \alpha_i \in A_i\bigr\}$, $i=0,1$, 
be two families of mappings, where $A_0$ and $A_1$ 
are index sets. The solution $U$ satisfies the 
condition $U \in \mathcal{X}$, 
where the set $\mathcal{X}$, defined 
in \eqref{calX}, depends on these 
families of mappings. 
\end{hypo}

Given a filtration $\BF=(\CF_t)_{t\in [0,T]}$ on 
a probability space $(\Omega,\CF, \PP)$, 
the augmented filtration $\BF^{\PP}
=(\CF_t^\mathbb{P})_{t\in [0,T]}$
is defined for each $t \ge 0$ as 
$\CF_t^\mathbb{P}=\sigma(\mathcal{F}_t 
\cup \mathcal{N})$, where $\mathcal{N}$ 
is the collection of all $\mathbb{P}$-null sets 
in $\mathcal{F}$. The augmented filtration is complete 
(i.e., it contains all null sets 
and is right-continuous). 

The next lemma demonstrates that although the 
tuple $(U,\bW, \eta, U_0)$, defined on a given 
filtered probability space $\MA$ as in 
\eqref{probabilitysp}, is not initially assumed to 
solve the SPDE, the equality of the laws, together with 
the existence of a solution on a different 
filtered probability space $\bar{\MA}$, implies 
that $(U, \bW, \eta, U_0)$ must satisfy the 
SPDE. Recall that pathwise uniqueness ensures that two 
solutions with the same driving 
noise (Wiener process and Poisson random measure) and 
initial conditions are indistinguishable. 
Therefore, when pathwise uniqueness is known, even 
if the solutions are defined on different 
filtered probability spaces, they can be 
related through their distributions. 
This connection allows us to conclude the existence of 
a unique solution on the original space $\MA$.

\begin{lemma}\label{lem:transequal}
Consider a filtered probability space $\MA=(\Omega,\CF,\BF,\PP)$
with filtration $\BF=(\mathcal F_t)_{t\in \INT}$,
along with the following additional elements:
\begin{itemize}
	\item $U$, a $\Bbb D({[0,T]};E_1)$-valued random variable,
	
	\item $\bW$, a cylindrical Wiener process on
	$\WienH$, represented as \eqref{eq:wiener_represent},

	\item $\eta$, a random variable with values in
	$M_{{\Bbb N}}(\{S_n \times\INT\})$, and
	\item $U_0$, a $\CF_0$-measurable 
	$E_2$-valued random variable,
\end{itemize}
all defined on $\MA$. In addition,  suppose that 
for any $t\in [0,T]$, $U(t)$, 
$\mathscr{W}_t$, and $\eta_t$ are $\mathcal F_t$-measurable, 
and that $\mathscr{W}^t$ and  $\eta^t$ 
are independent of $\mathcal F_t$ (see 
Definition \ref{def:information} and Lemma \ref{adapttimhom}).  

Suppose there exists a solution
$$
(\bar{\MA},\bar U,\bar{\bW},\bar\eta,\bar U_0),
$$
where $\bar{\MA}:=(\bar{\Omega},\bar{\CF},
\bar{\BF},\bar{\PP})$, $\bar{\BF}=(\bar{\CF}_t)_{t\in [0,T]}$,
is a potentially different filtered probability space,
and this solution to the SPDE \eqref{SPDE0} adheres to
the conditions outlined in Definition \ref{def_solution}
and Assumption \ref{hyp_solution}, such that
the law of $\bigl(U,\bW,\eta,U_0\bigr)$ 
coincides with the law of 
$\bigl(\bar U,\bar{\bW},\bar\eta,\bar U_0\bigr)$
on $\DD({[0,T]};E_1)\times  C_b(0,T;\WienH)\times
M_{{\Bbb N}}(\{S_n \times\INT\}) \times E_2$.

\medskip

Then the tuple $(\MA^{\PP},U,\bW, \eta,U_0)$, 
where $\MA^{\PP}:=(\Omega,\CF,\BF^{\PP},\PP)$, 
is a solution to the SPDE \eqref{SPDE0}, 
according to Definition \ref{def_solution} 
and Assumption \ref{hyp_solution}.
\end{lemma}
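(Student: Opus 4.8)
The plan is to transfer all the measure-theoretic and pathwise properties from the solution $(\bar\MA,\bar U,\bar\bW,\bar\eta,\bar U_0)$ to the tuple $(\MA^\PP,U,\bW,\eta,U_0)$ by exploiting that these objects have the same joint law on the product space $\DD([0,T];E_1)\times C_b(0,T;\WienH)\times M_\NN(\{S_n\times\INT\})\times E_2$, and then to invoke the compatibility hypothesis built into the statement to handle the one property---the martingale / It\^o-integral relation \eqref{SPDE}---that is not expressible purely through the joint law of the endpoints. First I would check the easy structural items of Definition \ref{def_solution}: the augmented space $\MA^\PP$ satisfies the usual conditions by construction; $\bW$ is still a cylindrical Wiener process and $\eta$ still a time-homogeneous Poisson random measure with intensity $\nu$, since these are distributional properties and $\Law(\bW)$, $\Law(\eta)$ are unchanged; $U_0$ has law $\rho_0$ and is $\CF_0$-measurable, hence $\CF_0^\PP$-measurable; and $U$ is $\DD([0,T];E_1)$-valued, hence c\`adl\`ag in $E_1$, while its progressive measurability in $H$ with respect to $\BF^\PP$ follows from the hypothesis that $U(t)$ is $\CF_t$-measurable together with c\`adl\`ag regularity (a right-continuous adapted process is progressively measurable). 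Conditions (vi) and (vii), and the membership $U\in\CX$ from Assumption \ref{hyp_solution}, are all of the form ``$\EE\,\theta(U)\le R$'' or ``$\PP(\theta(U)<\infty)=1$'' or ``a certain $\PP$-a.s. finite integral built from the path of $U$ is finite'', i.e. each is an event or an expectation that depends only on the law of $U$ (equivalently, of the path $U$), so it transfers verbatim from $\bar U$ to $U$ because $\Law(U)=\Law(\bar U)$.

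The substantive step is condition (viii): that $U$ satisfies the variational identity \eqref{SPDE} $\PP$-a.s. for every $\varphi\in V$ and every $t\in[0,T]$. Here one cannot argue by equality of laws alone, because \eqref{SPDE} involves the stochastic integrals $\int_0^t\langle\sigma(s,U(s))h_k,\varphi\rangle\,d\beta_k(s)$ and $\int_0^t\int_S\langle c(s,x,U(s)),\varphi\rangle\,\tilde\eta(dx,ds)$, whose very definition depends on the filtration and on the adaptedness/compatibility of $U$ relative to the driving noise. The strategy is: (1) use the countable index sets---the dense countable $V_d=\{\varphi_\kk\}$ in $V$ and the countable dense $\QQ_T\subset[0,T]$ from the definition of $\Gamma$ in \eqref{gammaus}, \eqref{abstracteqnew}---to reduce \eqref{SPDE} to the statement $\Gamma_{\varphi_\kk,t}(U,(\bW,\eta,U_0))=0$ $\PP$-a.s. for all $\kk\in\NN$ and $t\in\QQ_T$, then recover all $t\in[0,T]$ and all $\varphi\in V$ by c\`adl\`ag continuity of both sides and density; (2) for the driving objects, verify via the measurability/independence hypotheses of the lemma together with Lemma \ref{adapttimhom} and Lemma \ref{anothercomp} that $U$ is temporally compatible with $Y=(\bW,\eta,U_0)$ on $\MA^\PP$---this is exactly why the lemma assumes $U(t),\mathscr W_t,\eta_t$ are $\CF_t$-measurable and $\mathscr W^t,\eta^t$ are independent of $\CF_t$; (3) invoke the fact that, for a compatible triple, the stochastic integrals appearing in $\Gamma_{\varphi,t}$ can be constructed as measurable (in fact, almost sure limit-of-Riemann-sum) functionals of the path $(U,\bW,\eta)$ that do not depend on which compatible filtered space carries them---this is the Kurtz-type observation that $\Gamma_{\varphi,t}$ is a well-defined Borel functional on $B_1\times B_2$ once compatibility holds. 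Since $\bar U$ is compatible with $(\bar\bW,\bar\eta,\bar U_0)$ (being a genuine solution) and satisfies $\Gamma_{\varphi_\kk,t}(\bar U,(\bar\bW,\bar\eta,\bar U_0))=0$ $\bar\PP$-a.s., and since $\Law(U,\bW,\eta,U_0)=\Law(\bar U,\bar\bW,\bar\eta,\bar U_0)$, the $\PP$-probability of the event $\{\Gamma_{\varphi_\kk,t}(U,(\bW,\eta,U_0))=0\}$ equals the $\bar\PP$-probability of the corresponding event for the barred objects, namely $1$; taking a countable intersection over $\kk\in\NN$ and $t\in\QQ_T$ gives \eqref{SPDE} on a $\PP$-full set, and c\`adl\`ag-in-$E_1$ regularity plus density of $V_d$ in $V$ (and continuity of $\varphi\mapsto\langle\cdot,\varphi\rangle$-type terms, using (vii) for the dominated-convergence arguments on the drift and jump integrals, and an isometry/BDG estimate for the Wiener integral) upgrades it to all $t\in[0,T]$ and all $\varphi\in V$.

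\textbf{Main obstacle.} The crux, and the step I expect to demand the most care, is making precise the claim in item (3): that each term of $\Gamma_{\varphi,t}$---in particular the two stochastic integrals---is a measurable functional $\Phi_{\varphi,t}:B_1\times B_2\to\RR$ of the \emph{paths} alone, independent of the underlying filtered probability space, \emph{provided} the pair is compatible. For the drift integral this is immediate (it is a pathwise Lebesgue integral, using (vii)); for the compensated-Poisson integral it follows by approximating $\eta$ from its atoms and using that the integrand is progressively measurable, so the integral is an a.s.\ limit of finite sums read off the path of $(U,\eta)$; for the It\^o--Wiener integral one must pass through step-process approximations $\sigma(s,U(s))\approx\sum_j\sigma(t_{j-1},U(t_{j-1}))\mathbf 1_{(t_{j-1},t_j]}(s)$, whose stochastic integrals \emph{are} explicit path functionals $\sum_j\langle\sigma(t_{j-1},U(t_{j-1}))h_k,\varphi\rangle(\beta_k(t_j)-\beta_k(t_{j-1}))$, and then take an $L^2$-limit that, by the It\^o isometry controlled via (vii), does not see the filtration---here compatibility is exactly what guarantees that the approximating sums are the correct discretization (i.e.\ that $U(t_{j-1})$ is independent of the future Wiener increments), so that the limit computed on $\MA^\PP$ agrees $\PP$-a.s.\ with the limit that defines the stochastic integral in \eqref{SPDE}, and simultaneously agrees in law with the corresponding object for $\bar U$. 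Once this ``the stochastic integrals are compatible-path functionals'' principle is established, the rest is bookkeeping with countable exceptional sets and the augmentation $\BF\rightsquigarrow\BF^\PP$ (which is harmless since adding null sets changes neither measurability of the $\CF_t$-measurable data nor the independence statements).
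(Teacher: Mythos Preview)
Your proposal is correct and follows essentially the same architecture as the paper: dispose of items (i)--(vii) and Assumption~\ref{hyp_solution} by equality of laws, then concentrate on (viii) by showing that each term in the operator $\Gamma_{\varphi,t}$ (the paper calls it $\CK^\varphi_\MA$) has the same joint law with $U$ as the corresponding barred term has with $\bar U$, so that the almost-sure vanishing transfers.

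The only notable difference is in how the key step---``the stochastic integrals are determined, up to null sets, by the joint law of the integrand process and the driving noise''---is justified. You propose to prove this directly: for the Wiener integral via step-process approximation and the It\^o isometry, and for the compensated Poisson integral via the atomic structure of $\eta$, each time producing an explicit path functional whose $L^2$-limit defines the integral on any compatible filtered space. The paper instead outsources exactly these facts to pre-existing results: \cite[Theorem~8.3]{Ondrejat:2004aa} for the drift integral, \cite[Theorem~8.6]{Ondrejat:2004aa} for the Wiener integral, and \cite[Proposition~B.1, Theorem~A.1]{Bouard:2019ab} for the Poisson integral (with the shifted Haar projections supplying the progressive-measurable version). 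Your route is self-contained and makes the role of compatibility transparent; the paper's route is shorter but requires the reader to look up the cited transfer lemmas, which in turn are proved by precisely the approximation arguments you sketch. Either way the mathematics is the same.
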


\begin{proof}
The proof is similar to the proof of  
Lemma 4.4 in \cite{Bouard:2019ab} with the
modification of incorporating the Wiener process
and  the variational setting.
Given the equality of laws and the premise that
$(\bar{\MA},\bar U,\bar {\bW},\bar\eta,\bar U_0)$
constitutes a solution, the procedure of verifying that 
$(\MA^{\PP},U,\bW, \eta)$ meets the criteria 
(i) through (vii) outlined in Definition \ref{def_solution} 
and conforms to Assumption \ref{hyp_solution}
follows a routine argument. Consequently, 
our focus below is to verify condition (viii). This 
involves demonstrating that the triple 
$(U,\bW, \eta,U_0)$ satisfies the 
specified SPDE \eqref{SPDE0} 
in the sense of \eqref{SPDE}.

Consider a process $U$ that is progressively 
measurable over $\MA$ and c\`adl\`ag in the space $E_1$, meaning 
that for each time $t \geq 0$, $U(t)$ takes values in $E_1$, with 
well-defined left limits, and satisfies $\lim_{s \downarrow t} 
U(s) = U(t)$ in $E_1$ almost surely with respect to $\PP$.
Suppose this process satisfies the condition given 
in \eqref{finiteintegrals}. 
For every test function $\varphi \in V$ 
(see \eqref{eq:Gelfand}) and time $t\in \INT$, 
we introduce a nonlinear 
mapping $\CK^\varphi_{\MA}$ as follows:
\begin{align*}
	\CK^\varphi_{\MA}\bigl(U,\bW,\eta,U_0\bigr)(t)
	& := \la U_0,\varphi\ra
	+ \int_0^t \langle b(s,U(s)),\varphi \rangle
	\\ & \qquad
	+ \int_0^t \sum_{{k}=1}^\infty 
	\langle \sigma(s,U(s))[h_{k}],
	\varphi\rangle\,d{\beta^{k}}(s)
	\\ & \qquad
	+ \int_0^t\int_S\langle c(s,x,U(s)),
	\varphi\rangle \,\tilde\eta(dx,ds).
\end{align*}
Note that ${\CK^\varphi_{\MA}}(V,\bW,\eta,U_0)$ is contingent
upon $\bW$ through the Hilbert space $\WienH$ and
on $\eta$ via its compensator. Consequently, it also depends
on the probability measure $\PP$.
The objective is to demonstrate that if, for 
all $ \varphi \in V $ and $ t \in [0, T] $,
$$
\bar{\PP}\Bigl( \CK^\varphi_{\bar{\MA}}
\bigl(\bar{U},\bar{\bW},\bar{\eta},\bar{U}_0\bigr)(t)
-\bar{U}(t)=0\Bigr)=1,
$$
then it necessarily follows that, for all 
$\varphi \in V$ and $t \in [0, T]$,
\begin{equation}\label{eq:PK-eqn}
	\PP\Bigl( \CK^\varphi_{\MA}
	\bigl(U,\bW,\eta,U_0\bigr)(t)-U(t)=0\Bigr)=1.
\end{equation}

To establish this, we examine each 
component of the operators 
$\CK^\varphi_{\MA}\bigl(U,\bW,\eta,U_0\bigr)$ 
and $\CK^\varphi_{\bar{\MA}}(\bar{U},\bar{\bW},
\bar{\eta},\bar{U}_0)$, demonstrating their 
equivalence in distribution. This equivalence directly 
implies that \eqref{eq:PK-eqn} holds, 
or equivalently, the tuple $(\MA,U,\bW, \eta,U_0)$ serves 
as a solution to the SPDE \eqref{SPDE0}.

Initially, it is evident that the components of
${\CK^\varphi_{\MA}}(\bar{U},\bar{\bW},\bar{\eta},\bar{U}_0)$
adhere to \eqref{finiteintegrals}.
Given that the laws of $(U, \bW, \eta,U_0)$ and
$(\bar{U}, \bar{\bW}, \bar{\eta},\bar{U}_0)$ are equal, and
considering the functions $b$, $\sigma$, and $c$ 
are measurable, it
naturally follows that the components of 
$\CK^\varphi_{\MA}\bigl(U,\bW,\eta,U_0\bigr)$
equally fulfill \eqref{finiteintegrals}. 
Recall also that the initial data $U_0$, $\bar U_0$ 
are given and share the same law.

Define the following real-valued processes:
$$
\overline{\mathfrak{b}}_\varphi(s,\bar \omega)
:= \left\langle b(s,\bar U(s,\bar \omega)), \varphi \right\rangle,
\quad s\in [0,T],\,\, \bar \omega \in \bar \Omega,
$$
and
$$
\mathfrak{b}_\varphi(s,\omega) 
:=\left \langle b(s, U(s,\omega)),
\varphi \right\rangle, \quad s\in [0,T],
\,\, \omega \in \Omega.
$$
Given the underlying assumptions, we have 
$\Law(U) = \Law(\bar U)$ in
the intersection space $\mathbb{D}([0,T];E)
\cap L^2(0,T;V)$. Consequently, 
$\mathfrak{b}_\varphi$ and 
$\overline{\mathfrak{b}}_\varphi$
share identical laws on $\mathbb{D}([0,T];\RR)$ 
(for every $\varphi\in V$). Leveraging 
\cite[Theorem 8.3]{Ondrejat:2004aa}, 
it follows that for any 
time $t\in[0,T]$, the laws
$$
\Law\left(\bar U,
\int_0^t\overline{\mathfrak{b}}_\varphi(s)\,ds\right)
\quad\text{and}\quad
\Law\left(U,
\int_0^t \mathfrak{b}_\varphi(s)\,ds\right)
$$
are identical on the space 
$\DD([0,T];E)\times \RR$ 
(across all $\varphi\in V$).

Define the following real-valued processes:
$$
\overline{\mathfrak{s}}_\varphi^{k}(s,\bar \omega)
:=\left \langle \sigma(s,\bar U(s,\bar \omega))
[h_{k}], \varphi \right\rangle,
\quad s \in [0, T], \,\,  
\bar \omega \in \bar \Omega
$$
and
$$
\mathfrak{s}_\varphi^{k}(s, \omega) :=
\left\langle \sigma(s, U(s, \omega))[h_{k}], 
\varphi \right\rangle,
\quad s \in [0, T], 
\, \, \omega \in \Omega.
$$
The two processes 
$\left\{{\overline{\mathfrak{s}}_\varphi^{k}}(s) \mid s \in [0,T]\right\}$
and $\left\{{\overline{\mathfrak{s}}_\varphi^{k}}(s) \mid s\in [0,T]\right\}$
are adapted to the filtrations $(\bar{\CF}_{s})_{s \in [0,T]}$
and $(\CF_s)_{s \in [0,T]}$, respectively.
Besides, the laws of $(U, \bW)$ and
$(\bar{U}, \bar{\bW})$ are identical.
Drawing upon \cite[Theorem 8.6]{Ondrejat:2004aa}, we
infer that for any $t \in [0, T]$, the laws
$$
\Law\left(\bar U, \int_0^t \sum_{{k}=1}^\infty
\overline{\mathfrak{s}}^{k}_\varphi(s) 
\,d{\beta^{k}}(s)\right)
\quad\text{and}\quad
\Law\left(U, \int_0^t \sum_{{k}=1}^\infty
\mathfrak{s}^{k}_\varphi(s) 
\,d{\beta^{k}}(s)\right)
$$
are equal on the space 
$\bigl(\DD([0,T];E_1) \cap 
L^2(0,T;V)\bigr) \times \RR$ 
(for all $\varphi \in V$).

In the final step, define the 
processes through
$$
\overline{\mathfrak{c}}_\varphi(s,\bar \omega)
:= \left\langle c(s,\bar U(s,\bar \omega)),
\varphi \right\rangle, \quad s\in [0,T], \,\,
\bar \omega \in \bar\Omega
$$
and
$$
\mathfrak{c}_\varphi(s,\omega)
:=\left\langle c(s, U(s,\omega)),
\varphi\right\rangle,
\quad s\in [0,T], \,\, \omega \in \Omega,
$$
which are adapted to the filtrations 
$(\bar \CF_{s})_{s\in[0,T]}$
and $(\CF_{s})_{s\in[0,T]}$, respectively. 
By the assumption \eqref{finiteintegrals} 
(and the equality of laws), 
$\bar{\mathfrak{c}}_\varphi$ and 
$\mathfrak{c}_\varphi$ belong a.s.~to $L^p(\INT;\RR)$.
Proposition B.1 in \cite{Bouard:2019ab} 
guarantees the progressive measurability of 
$\bar{\mathfrak{c}}_\varphi$ and $\mathfrak{c}_\varphi$. 
This is explained by employing a sequence of 
shifted Haar projections, which 
approximates (in $L^p$) the original processes 
with simpler processes that 
possess the necessary measurability. 
Define the stochastic integrals
$$
\bar \CI_\varphi(t)
=\int_{0}^t \int_S \left\langle c(s,\bar U(s),z),
\varphi \right\rangle 
\, \tilde{\bar \eta}(dz,ds), 
\quad t\in[0,T],
$$
and
$$
\CI_\varphi(t) 
= \int_{0}^t \int_S
\left\langle {c(s,U(s),z)},\varphi \right\rangle
\, \tilde \eta(dz,ds), \quad t\in[0,T].
$$
Applying \cite[Theorem A.1]{Bouard:2019ab}, we conclude
that $(\bar \CI_\varphi(t),\bar U,\bar \eta)$ and
$(\CI_\varphi(t),U,\eta)$ possess the same law on
$\RR\times \DD([0,T];E)\times
\CMM_{\NN}(\{S_n \times\INT\})$.

Hence, the lemma is proved.
\end{proof}

In the abstract setting, the following 
generalisation of the Yamada-Watanabe 
theorem holds \cite[Theorem 1.5]{Kurtz:2014aa}.

\begin{theorem}\label{theorem_abs}
The following are equivalent:
\begin{itemize}
	\item $\CS_{\Gamma,\rho,\mathcal{T}}\not = \emptyset$, see 
	\eqref{eq:S-Gamma-rho-C}, 
	and pathwise uniqueness holds.
	
	\item There exists a strong solution 
	(Definition \ref{strongsol}) and joint uniqueness 
	in law (Definition \ref{pointu02}) holds.
\end{itemize}
\end{theorem}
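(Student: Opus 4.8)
The plan is to prove the two implications of the equivalence separately, following the abstract Yamada--Watanabe--Kurtz scheme and checking that the variational, Wiener-plus-Poisson structure does not interfere with the measurable-selection and compatibility arguments. Throughout I will use that $B_1=\DD([0,T];E_1)$ (Skorokhod topology) and $B_2$ from \eqref{defB2} are Polish: $E_1$ is a separable Banach space, $C_b([0,T];\CH)$ is a separable Banach space, $M_\NN(\{S_n\times\INT\})$ is Polish by Lemma \ref{measure_lemma}, and $E_2$ is separable, so the product is Polish; in particular all the relevant laws admit regular conditional probabilities.

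\emph{Hard direction: $\CS_{\Gamma,\rho,\CT}\neq\emptyset$ and pathwise uniqueness $\Rightarrow$ strong solution and joint uniqueness in law.} Fix $\mu\in\CS_{\Gamma,\rho,\CT}$ and disintegrate it over its second marginal, $\mu(dx\,dy)=Q(y,dx)\,\rho(dy)$ with $Q\colon B_2\to\CP(B_1)$ Borel measurable. Form the ``doubled'' measure $\widehat\mu(dx_1\,dx_2\,dy):=Q(y,dx_1)\,Q(y,dx_2)\,\rho(dy)$ on $B_1\times B_1\times B_2$, realise it as the law of a triple $(\widehat X_1,\widehat X_2,\widehat Y)$ on some probability space, and endow that space with the augmentation of the filtration $\widehat\CF_t:=\SF_t^{\widehat X_1}\vee\SF_t^{\widehat X_2}\vee\SF_t^{\widehat Y}$. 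One checks that for $i=1,2$ the pair $(\widehat X_i,\widehat Y)$ still has law $\mu$ and is temporally compatible for this enlarged filtration; by Lemma \ref{anothercomp} this reduces to showing that the future increments $\mathscr{W}^t$ and $\eta^t$ of the noise extracted from $\widehat Y$ are independent of $\widehat\CF_t$, which follows from the conditional independence of $\widehat X_1$ and $\widehat X_2$ given $\widehat Y$ together with the compatibility already built into $\mu$. Hence $\mu_{\widehat X_1,\widehat Y}$ and $\mu_{\widehat X_2,\widehat Y}$ both lie in $\CS_{\Gamma,\rho,\CT}$ over a common filtered probability space, so pathwise uniqueness (Definition \ref{pointu}) gives $\widehat X_1=\widehat X_2$ almost surely. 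Therefore, for $\rho$-a.e.\ $y$, the product kernel $Q(y,\cdot)\otimes Q(y,\cdot)$ is carried by the diagonal, which forces $Q(y,\cdot)=\delta_{F(y)}$ for a Borel map $F\colon B_2\to B_1$ (Dirac-kernel characterisation plus a measurable selection theorem), and $X:=F(Y)$ is then a strong solution in the sense of Definition \ref{strongsol}. Running the same doubling with two measures $\mu,\mu'\in\CS_{\Gamma,\rho,\CT}$---using $Q$ on the first copy and $Q'$ on the second---forces $F=F'$ $\rho$-a.e., hence $\mu=\mu'$, which is joint uniqueness in law (Definition \ref{pointu02}).

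\emph{Easy direction: strong solution plus joint uniqueness in law $\Rightarrow$ $\CS_{\Gamma,\rho,\CT}\neq\emptyset$ and pathwise uniqueness.} If $X=F(Y)$ is a strong solution, its joint law $\mu_{X,Y}$ is well defined, belongs to $\CS_{\Gamma,\rho}$ by \cite[Lemma 1.3]{Kurtz:2014aa}, and is temporally compatible because $X$ is in particular a solution in the sense of Definition \ref{def_solution}, hence progressively measurable with respect to (the augmentation of) the filtration induced by $Y$, so $\SF_t^{X}\subseteq\SF_t^{Y}$ and the identity in Definition \ref{jcomp} holds trivially; thus $\mu_{X,Y}\in\CS_{\Gamma,\rho,\CT}$ and this set is nonempty. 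For pathwise uniqueness, let $X_1,X_2,Y$ be defined on a common probability space with $\mu_{X_i,Y}\in\CS_{\Gamma,\rho,\CT}$; joint uniqueness in law gives $\mu_{X_1,Y}=\mu_{X_2,Y}=\mu_{F(Y),Y}$, and under this measure the conditional law of the first coordinate given the second is the Dirac mass at $F(\cdot)$, so $X_1=F(Y)=X_2$ almost surely.

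\emph{Main obstacle.} The delicate step is the compatibility bookkeeping in the doubling construction: one must show that passing to the filtration $\SF_t^{\widehat X_1}\vee\SF_t^{\widehat X_2}\vee\SF_t^{\widehat Y}$ preserves the fact that $\widehat{\bW}$ is a cylindrical Wiener process and $\widehat\eta$ a Poisson random measure with respect to it---equivalently, that temporal compatibility of $\mu$ survives the construction. This rests on the conditional independence of the two solution copies given the common driving noise and is exactly the content that Lemma \ref{anothercomp} is designed to deliver; the remaining ingredients (existence of regular conditional probabilities, the Dirac-kernel characterisation, and measurable selection of $F$) are routine once the underlying spaces are known to be Polish.
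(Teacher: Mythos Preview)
The paper does not prove Theorem \ref{theorem_abs}; it is stated with the preamble ``In the abstract setting, the following generalisation of the Yamada--Watanabe theorem holds \cite[Theorem 1.5]{Kurtz:2014aa}'' and is used as a black box. Your proposal therefore goes well beyond what the paper itself supplies, by reproducing Kurtz's argument in outline. The sketch of the hard direction is correct and is precisely Kurtz's method: disintegrate, double via $Q(y,dx_1)Q(y,dx_2)\rho(dy)$, verify that compatibility survives the doubling (which, as you note, is what Lemma \ref{anothercomp} is for), apply pathwise uniqueness to force a Dirac kernel, and extract $F$ by measurable selection on the Polish space $B_2$.

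One small circularity in your easy direction: you justify temporal compatibility of the strong solution by invoking Definition \ref{def_solution}(v), but Theorem \ref{theorem_abs} is phrased in the abstract Kurtz framework, where Definition \ref{strongsol} only asks for $X=F(Y)$ with $F$ Borel and says nothing about adaptedness. A bare Borel $F$ could read the future of $Y$, so $\SF_t^{X}\subseteq\SF_t^{Y}$ need not hold. In Kurtz's own statement the strong solution produced by the hard direction is automatically compatible, and the reverse implication is phrased for strong \emph{compatible} solutions; this is a matter of aligning the definitions rather than a real gap in the mathematics.
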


The abstract Theorem \ref{theorem_abs} 
can be tailored to our specific SPDE context. 
We now outline the detailed setup 
and reformulate Theorem \ref{theorem_abs} 
within this framework. Consider 
a filtered probability space $\MA$ 
(see \eqref{probabilitysp}) with a cylindrical 
Wiener process $\bW$ on $\CH$ (see Section \ref{defcyw}), 
a Poisson random measure $\eta$ on $S$ 
(see Section \ref{deflev}), and an initial condition 
$U_0 \in E_2$, where $U_0$ is $\CF_0$-measurable. Here, 
$\eta$ and $\bW$ are independent and adapted 
to the filtration $(\CF_t)_{t \in [0, T]}$. 
Define $Y = (\bW, \eta, U_0)$ on $B_2$ 
(see \eqref{defB2}), and let 
$X = U$ denote a solution to the 
SPDE \eqref{SPDE0} within $B_1$ (see \eqref{eq:B1-def}).
The mappings $\Omega \ni \omega \mapsto X(\omega)$ 
and $\Omega \ni \omega \mapsto Y(\omega)$ 
induce two filtrations  $(\SF^X_t)_{t \in [0,T]}$ 
and $(\SF^Y_t)_{t \in [0,T]}$ on 
$\MA$ (see Definition \ref{jcomp}). 
Here, the notion of temporal compatibility 
becomes relevant.

To clarify further, let us specify $\mathcal{B}^{B_i}_t$ for $i =1,2$, 
where $i = 1$ corresponds to $X$, and $i = 2$ corresponds to $Y$. 
For the solution process $X$, as defined in Definition \ref{def38}, 
the $\sigma$-algebra $\CB_t^{B_1}$ is generated by the 
coordinate map $\pi_s: z \in B_1 \mapsto z(s) \in 
E_1$ for $s \leq t$, $t \in [0, T]$. 
The input data $Y$ takes values in the path space 
$B_2 = C([0,T];\CH) \times M_\NN(\{S_n \times [0,T]\}) \times E_2$. 
On this path space $B_2$, we define the $\sigma$-algebra 
generated by the input data $Y$ as 
$\CB_t^{B_2} = \sigma(\bW_t) \otimes \sigma(R_t) \otimes \sigma_B(U_0)$, 
where $\bW_t$ and $R_t$, for $t \in [0, T]$, 
denote the canonical restriction 
mappings defined as follows:
\begin{itemize}
	\item $\bW_t$ restricts a Wiener process $\bW$ 
	to the interval $[0, t]$, i.e., $\bW_t$, mapping 
	from 
	$C([0, T]; \WienH)$ to itself, is defined by 
	$\bW \mapsto \bW \mathds{1}_{[0, t]}$.

	\item $R_t$ restricts the Poisson random measure $\eta$ to 
	the set $\Sn \times (0, t]$, i.e., $R_t$, which maps 
	from $M_{\NN}\left(\{S_n \times \INT\}\right)$ to 
	itself, is defined as
	\begin{align*}
		&R_t:\eta\left( A \times I \right) 
		\mapsto \eta\left(A \times 
		\left( I \cap (0, t] \right)\right), 
		\\ & 
		\text{where $A \in \mathscr{B}(S_n)$ for 
		some $n \in \NN$ and $I \in \mathscr{B}([0, T])$.}
	\end{align*}
\end{itemize}
Since the initial condition $U_0$ is known at 
the outset and remains unchanged thereafter, we 
have $\mathcal{B}_0^{B_2} = \sigma(U_0)$. 
Moreover, as $U_0$ is $\CF_0$-measurable by definition,
we set $\CF_0 := \sigma(\{U_0^{-1}(B) : B \in \mathscr{B}(E_2)\})$, 
ensuring that $U_0$ is measurable with respect to 
both $\CF_0$ and $\mathscr{B}(E_2)$.

Let $\rho_\bW$ denote the law of the 
cylindrical Wiener process $\bW$ over 
$C([0,T];\WienH)$. Similarly, let $\rho_\nu$ 
represent the law of the Poisson random measure 
$\eta$ equipped with the L{\'e}vy measure 
$\nu$ on $M_{\NN}(\{S_n \times\INT\})$. 
Furthermore, let $\rho_0$ symbolize the 
law of the $\CF_0$-measurable random variable 
$U_0$ over $E_2$. Given the independence 
of $\bW$, $\eta$, $U_0$, we proceed to define 
their joint law $\rho$ as the product measure
\begin{equation}\label{eq:joint-law}
	\rho=\rho_\bW \times \rho_\nu \times \rho_0.
\end{equation}

Let $V_d=\{\varphi _k:k\in\NN\}$ be a dense 
countable subset of $V$ (see \eqref{eq:Gelfand}), 
and recall the set $\Gamma$ of mappings 
given by \ref{gammaus} and \eqref{abstracteqnew}. 
The SPDE \eqref{SPDE0} (via \eqref{SPDE}) defines the mappings in
$\Gamma$ in \eqref{gammaus}, \eqref{abstracteqnew}. 
Specifically, given the 
filtered probability space $\MA$ described in 
\eqref{probabilitysp}, we define $\Gamma$ by 
associating a real number 
$\Gamma_{\varphi,t}(U,(\bW,\eta,U_0))$ with an 
arbitrary process $U$ and any triplet $(\bW,\eta,U_0)$ 
on $\MA$, in accordance with \eqref{abstracteqnew}.

Using the notation introduced above, we can now 
reformulate the SPDE \eqref{SPDE0} as a property of a 
solution measure on the path spaces $B_1$ and $B_2$, 
expressed in the format \eqref{sol01}. To be more precise,  let 
$X$ represent the solution $U$, and $Y$ 
denote the input data $(\bW, \eta, U_0)$,  then $X$ is 
a solution if   $\PP\lk( \Gamma(X,Y) = 0\rk)=1$.
Our objective is to identify a solution measure
$\mu\in\mathcal{S}_{\Gamma,\rho,\mathcal{T}}$ 
on $B_1 \times B_2$ (see \eqref{eq:S-Gamma-rho-C}) 
that adheres to the condition
\begin{equation}\label{eq:solm-new}
	\int_{B_1\times B_2} 
	\left|\Gamma_\varphi(x,y)(t)\right|
	\, \mu(dx,dy) = 0,
\end{equation}
for all $\varphi\in V_d$ and 
$t\in \QQ_T$ (compare with \eqref{solma}). 
Pathwise uniqueness, however, necessitates further
constraints on the solution beyond those 
defined in \eqref{eq:solm-new}.
The additional properties and regularity specified in 
Assumption \ref{hyp_solution} must be conveyed as 
a condition on the solution measure. Specifically, the 
solution measure $\mu$ is required to satisfy:
\begin{align}
	& \mu\left(\bigl\{ (x,y)\in B_1\times B_2:
	\theta_0^{\alpha_0}(x)<\infty\bigr\}\right) = 1,
	\quad
	\forall \alpha_0 \in A_0,
	\label{cons1}
	\\ &
	\int_{B_1\times B_2} \theta_1^{\alpha_1}(x)
	\, \mu(dx\times dy) < \infty,
	\quad \forall \alpha_1\in A_1.
	\label{cons2}
\end{align}

\begin{remark}

Although we will not make use of this, 
the set of solution measures $\mu$  
on $B_1\times B_2$ satisfying \eqref{cons1} and 
\eqref{cons2} is convex 
\cite[Lemma 3.8.]{Kurtz:2007aa}.
\end{remark}

We use the notation $\Gamma^{\theta}$ 
to represent the combination of constraints 
specified in \eqref{eq:solm-new}, \eqref{cons1}, 
and \eqref{cons2}.  The superscript $\theta$ 
indicates the additional constraints \eqref{cons1} 
and \eqref{cons2}. When $\mu$ denotes a Borel 
probability measure that is the joint law of the 
random vector $\bigl(U,(\bW,\eta,U_0)\bigr)$, we 
say that $\mu$ adheres to the Kurtz convexity 
constraint $\Gamma^{\theta}$.

Following \cite[p.~958]{Kurtz:2007aa}, we now 
explicitly define the Kurtz set 
$\CS_{\Gamma^\theta,\rho,\CT}$ 
(see \eqref{eq:S-Gamma-rho-C}) within 
the present SPDE context. 
This set consists of all solution measures 
that meet the specified criteria. 
Indeed, a probability measure 
$\mu \in \mathcal{P}(B_1 \times B_2)$ belongs to 
$\CS_{\Gamma^\theta,\rho,\CT}$ if and only if it 
satisfies the following conditions:
\begin{itemize}
	\item $\mu$ is in compliance with the convexity
	constraint $\Gamma^{\theta}$, 
	see \eqref{eq:solm-new}, \eqref{cons1}, and \eqref{cons2}, 
	and  $\mu$ is temporally compatible (see 
	Definition \ref{jcomp});
	
	\item For every $A$ in the Borel $\sigma$-algebra
	$\mathscr{B}(B_2)$, $\mu(B_1 \times A)=\rho(A)$,
	where $\rho$ is the joint law of $(\bW,\eta,U_0)$,
	see \eqref{eq:joint-law}.
\end{itemize}
An alternative description of the Kurtz set
$\CS_{\Gamma^{\theta},\rho,\CT}$ is
as follows: $\mu \in \mathcal{P}(B_1 \times B_2)$
belongs to $\CS_{\Gamma^\theta,\rho,\CT}$ if 
and only if the following criteria are met:
\begin{itemize}
	\item there exists a solution 
	tuple $\bigl(\bar{\MA}, \bar{U}, \bar{\bW},
	\bar{\eta},\bar{U}_0\bigr)$ 
	to the SPDE \eqref{SPDE0}, where $\bar{\MA}$ is 
	a filtered probability space, satisfying 
	Definition \ref{def_solution} and 
	Assumption \ref{hyp_solution}, and 
	\begin{itemize}	
		\item $\bar \bW$ is a cylindrical Wiener 
		process evolving over the Hilbert space 
		$\WienH$ (see Section \ref{defcyw});

		\item $\bar\eta$ is a Poisson random measure
		with the intensity measure $\nu$ 
		(see \eqref{eq:joint-law} 
		and Section \ref{def-PRM});
	
		\item $\bar U_0$ has distribution 
		$\rho_0$ (see \eqref{eq:joint-law}); 
	\end{itemize}
	
	\item $\mu$ is the joint law of
	$\bigl(\bar U,(\bar {\bW},{\bar\eta},\bar U_0)\bigr)$
	on $B_1 \times B_2$ (see \eqref{eq:B1-def} 
	and \eqref{defB2}).
\end{itemize}

Given the new constraints, the definitions of temporal 
compatibility and pointwise uniqueness must be adjusted 
accordingly.  In particular, the temporal compatibility condition 
requires that, for $t \in [0, T]$, the solution $\bar{U}(t)$ 
is independent of $\bar{\mathscr{W}}^t$ and $\bar{\eta}^t$ 
(see Definition \ref{def:information}) and is measurable 
with respect to $\bar{\mathscr{W}}_t$, $\bar{\eta}_t$, 
and $\sigma(\bar{U}_0)$.

\medskip

Our main result is presented in the next theorem, which 
establishes a connection between the existence 
of a weak solution (Definition \ref{weaksol}) with pathwise 
uniqueness (Definition \ref{pointu}) and the 
existence of a unique strong solution (see Definition \ref{strongsol}). 
Using the results developed earlier in this section, the 
proof follows an adaptation of \cite{Bouard:2019ab}, with 
the only modification being the inclusion of the Wiener process. 
Due to the similarity in reasoning, a detailed proof is omitted.

\begin{theorem}\label{theoremuniquness}
Consider a Gelfand triple $(V,H,V')$, see \eqref{eq:Gelfand}, 
along with Banach spaces $E_1$ and $E_2$. Here, $E_2$ is 
continuously embedded into $E_1$, and $V$ is 
continuously embedded into $E_1$.  
Additionally, $E_1$ is a UMD space of type 2. 
Let $\rho_0$ be a Borel probability measure on 
$E_2$, see \eqref{eq:joint-law}. 
Suppose $\WienH$ is a Hilbert space, $\nu$ is an
intensity measure over a Polish space $(S,\CS)$, 
see \eqref{eq:intensity0} and \eqref{eq:intensity-measure}, 
and assume that
\begin{itemize}
	\item there exists a solution
	$\bigl(\bar{\MA},\bar U,
	\bar{\bW},\bar\eta,\bar U_0\bigr)$
	to the SPDE \eqref{SPDE0} that
	adheres to Definition \ref{def_solution}
	and Assumption \ref{hyp_solution},
	such that $\bar{\bW}$ is a cylindrical Wiener process
	evolving over $\WienH$, see \eqref{eq:wiener_represent},
	$\nu$ is the intensity measure of $\bar\eta$, 
	and $\rho_0$ is the probability law of $\bar U_0$;

	\item pathwise uniqueness, as defined 
	in Definition \ref{pathwiseu}, is satisfied.
\end{itemize}

Then there exists a Borel measurable mapping
$$
F:C([0,T];\WienH)\times M_{\NN}(\{S_n \times\INT\})
\times E_2 \to\Bbb D([0,T];E_1),
$$
depending on $\WienH$, {$\nu$} and $\rho_0$, such that
\begin{itemize}
	\item if $(\MA,U,\bW, \eta,U_0)$ is a solution
	to the SPDE \eqref{SPDE0}, in the sense of
	Definition \ref{def_solution} 
	and Assumption \ref{hyp_solution},
	such that $\bW$ is a cylindrical 
	Wiener process on $\WienH$ with
	the representation \eqref{eq:wiener_represent}, 
	$\nu$ is the intensity measure of $\eta$, 
	and $\rho_0$ is the law of $U(0)$, then
	$$
	U=F(\bW,\eta,U_0) \quad 
	\text{$\PP$-almost surely},
	$$
	and $U$ is progressively measurable 
	with respect to the $\PP$-augmentation of the 
	filtration (cf.~Definition \ref{def:information})
	$$
	\qquad\qquad 
	\Bigl(\sigma\bigl(\left\{\mathscr{W}_t(h)
	:h\in\mathcal{H}\right\}\bigr),
	\sigma\bigl(\left\{\eta_t(V):
	V\in\CS\otimes\mathscr{B}([0,T])\right\}\bigr),
	\sigma(U_0)\Bigr)_{t\in\INT};
	$$
	
	\item if $\MA:=(\Omega,\CF,(\CF_t)_{t\in [0,T]},\PP)$ 
	is a filtered probability space, 
	$U_0$ is an $E_2$-valued $\CF_0$-measurable 
	random variable with law $\rho_0$, $\bW$ is a 
	cylindrical Wiener process on $\WienH$ with the
	representation \eqref{eq:wiener_represent}, and  $\eta$ is
	a time-homogeneous Poisson random measure 
	with intensity $\nu$, then
	$$
	U=F(\bW, \eta,U_0)
	$$
	is adapted to the augmented filtration
	$(\CF^{\PP}_t)_{t\in\INT}$, 
	$U(0)=U_0$, $\PP$-a.s., and
	$$
	\bigl(\MA^{\PP},U,\bW, \eta,U_0\bigr), 
	\quad \text{where $\MA^{\PP}
	:=\bigl(\Omega,\CF,(\CF^{\PP}_t)_{t\in\INT},\PP\bigr)$},
	$$
	constitutes a solution to the SPDE \eqref{SPDE0}
	in the sense of Definition \ref{def_solution},
	fulfilling Assumption \ref{hyp_solution}.
\end{itemize}
\end{theorem}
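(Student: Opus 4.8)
The plan is to derive Theorem~\ref{theoremuniquness} from the abstract equivalence of Theorem~\ref{theorem_abs}, applied with $\Gamma$ replaced by the constrained map $\Gamma^\theta$, by verifying the two conditions of its first alternative and then translating the resulting strong solution back into the SPDE language with the help of Lemma~\ref{lem:transequal}. \textbf{Step 1 (the Kurtz set is nonempty).} Given the hypothesised weak solution $\bigl(\bar\MA,\bar U,\bar\bW,\bar\eta,\bar U_0\bigr)$, let $\mu$ be the joint law of $\bigl(\bar U,(\bar\bW,\bar\eta,\bar U_0)\bigr)$ on $B_1\times B_2$. One checks directly that $\mu(B_1\times\cdot)=\rho$ (the product law \eqref{eq:joint-law}), that $\mu$ obeys the convexity constraint $\Gamma^\theta$ of \eqref{eq:solm-new}, \eqref{cons1}, \eqref{cons2} because $\bigl(\bar U,\bar\bW,\bar\eta,\bar U_0\bigr)$ satisfies \eqref{SPDE} and $\bar U\in\mathcal{X}$, and that $\mu$ is temporally compatible: the latter is exactly Lemma~\ref{anothercomp} applied to the single process $\bar U$, using that $\bar\bW,\bar\eta$ are adapted on $\bar\MA$ with increments independent of the past (cf.~Lemma~\ref{adapttimhom}). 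Hence $\CS_{\Gamma^\theta,\rho,\CT}\neq\emptyset$.

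\textbf{Step 2 (pathwise uniqueness: concrete $\Rightarrow$ abstract).} Next I would show that the hypothesis in the sense of Definition~\ref{pathwiseu} yields pathwise uniqueness in the abstract sense of Definition~\ref{pointu}. Let $X_1,X_2$ and $Y=(\bW,\eta,U_0)$ be defined on a common $(\Omega,\CF,\PP)$, jointly compatible, with $\mu_{X_i,Y}\in\CS_{\Gamma^\theta,\rho,\CT}$. On the $\PP$-augmented filtration generated by $(X_1,X_2,\bW,\eta,U_0)$, the hypotheses of Lemma~\ref{lem:transequal} hold (measurability of $\mathscr{W}_t,\eta_t,X_i(t)$ with respect to $\CF_t$ and independence of $\mathscr{W}^t,\eta^t$ from $\CF_t$, supplied by joint compatibility via Lemma~\ref{anothercomp}), and, using that each $\mu_{X_i,Y}$ is realised by a genuine SPDE solution on some filtered probability space (the alternative description of $\CS_{\Gamma^\theta,\rho,\CT}$), Lemma~\ref{lem:transequal} certifies that $\bigl(\MA^\PP,X_i,\bW,\eta,U_0\bigr)$ is a solution in the sense of Definition~\ref{def_solution} and Assumption~\ref{hyp_solution} for $i=1,2$, with the same driving data $(\bW,\eta,U_0)$ and, since $X_i(0)=U_0$, the same initial datum. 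Definition~\ref{pathwiseu} then forces $X_1(t)=X_2(t)$ $\PP$-a.s.\ for each $t$, and c\`adl\`ag-in-$E_1$ paths upgrade this to $\PP(X_1=X_2)=1$.

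\textbf{Step 3 (apply the abstract theorem and unpack).} With both conditions of the first alternative of Theorem~\ref{theorem_abs} in force, that theorem produces a strong solution (Definition~\ref{strongsol}) and joint uniqueness in law (Definition~\ref{pointu02}). The strong solution gives a Borel map $F:B_2\to B_1$ that, by construction, depends only on $(\Gamma^\theta,\rho)$ and hence only on $\WienH$, $\nu$, $\rho_0$, with $\bar U=F(\bar\bW,\bar\eta,\bar U_0)$ $\bar\PP$-a.s. For the first bullet of Theorem~\ref{theoremuniquness}, take any solution $(\MA,U,\bW,\eta,U_0)$; by the argument of Step~1 its joint law lies in $\CS_{\Gamma^\theta,\rho,\CT}$, so joint uniqueness in law gives $\Law(U,\bW,\eta,U_0)=\Law\bigl(F(\bar\bW,\bar\eta,\bar U_0),\bar\bW,\bar\eta,\bar U_0\bigr)$, whence $U=F(\bW,\eta,U_0)$ $\PP$-a.s. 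The stated progressive measurability of $U$ with respect to the $\PP$-augmentation of $\bigl(\sigma(\{\mathscr{W}_t(h):h\in\WienH\}),\sigma(\{\eta_t(V):V\in\CS\otimes\mathscr{B}(\INT)\}),\sigma(U_0)\bigr)_{t\in\INT}$ then follows: temporal compatibility (Lemma~\ref{anothercomp}) places the induced filtration $\SF_t^{U}$ inside this filtration up to null sets, and $U$ is progressively measurable in $H$ by Definition~\ref{def_solution}(v). For the second bullet, given arbitrary data $(\bW,\eta,U_0)$ on $\MA$ of law $\rho$, set $U:=F(\bW,\eta,U_0)$; then $\bigl(U,(\bW,\eta,U_0)\bigr)$ has law $\mu\in\CS_{\Gamma^\theta,\rho,\CT}$, so $U$ satisfies \eqref{eq:solm-new}, \eqref{cons1}, \eqref{cons2}, i.e.\ \eqref{SPDE} and $U\in\mathcal{X}$; Lemma~\ref{lem:transequal} then promotes $\bigl(\MA^\PP,U,\bW,\eta,U_0\bigr)$ to a solution in the sense of Definition~\ref{def_solution} adapted to $(\CF^\PP_t)_{t\in\INT}$, and $U(0)=U_0$ $\PP$-a.s.\ since the strong-solution map $F$ respects the initial datum.

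\textbf{Main obstacle.} The crux — and essentially the only place where the variational framework and the presence of both noises genuinely enter — is Step~2 together with the repeated uses of Lemma~\ref{lem:transequal}: one must track the filtration, its $\PP$-augmentation, and the temporal-compatibility property while identifying abstract solution measures in $\CS_{\Gamma^\theta,\rho,\CT}$ with bona fide SPDE solutions, and simultaneously re-derive the equality in law of the drift, the Wiener stochastic integral and the compensated-Poisson integral terms (via \cite[Theorems~8.3, 8.6]{Ondrejat:2004aa} and \cite[Theorem~A.1 and Proposition~B.1]{Bouard:2019ab}) together with the progressive-measurability-in-$H$ and c\`adl\`ag-in-$E_1$ requirements. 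Everything else is a routine re-reading of \cite{Bouard:2019ab} with the extra Wiener term appended throughout.
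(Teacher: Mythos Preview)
Your proposal is correct and follows essentially the same approach as the paper: verify that $\CS_{\Gamma^\theta,\rho,\CT}\neq\emptyset$ from the given weak solution, invoke Theorem~\ref{theorem_abs} together with pathwise uniqueness to obtain the Borel map $F$ and joint uniqueness in law, and use Lemma~\ref{lem:transequal} to pass between abstract solution measures and genuine SPDE solutions. In fact, the paper gives only a brief outline and defers the details to the adaptation of \cite{Bouard:2019ab}; your Step~2 (reducing concrete pathwise uniqueness to the abstract notion via Lemma~\ref{lem:transequal} and Lemma~\ref{anothercomp}) spells out precisely the part the paper's outline leaves implicit.
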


We briefly outline the proof of Theorem \ref{theoremuniquness}. 
From the assumptions of Theorem \ref{theoremuniquness}, it 
follows that the Kurtz set $\CS_{\Gamma^\theta,\rho,\CT}$ is non-empty. 
Furthermore, pathwise uniqueness is assumed. Applying 
Theorem \ref{theorem_abs}, we conclude the 
existence of a strong solution and joint uniqueness in law. 
In particular, there exists a Borel measurable function 
$$
F:C_b([0,T]; \WienH) \times M_{\NN}(\{S_n \times \INT\}) 
\times E_2 \to \DD([0,T]; E_1),
$$
such that, for a given filtered probability space 
$\MA:=(\Omega, \CF, \mathbb{F}, \PP)$ with 
$\mathbb{F} = (\CF_t)_{t \in [0,T]}$, where
$U_0$ is an $E_2$-valued, $\CF_0$-measurable 
random variable with law $\rho_0$, 
$\bW$ is a cylindrical Wiener process on $\WienH$ 
with the representation \eqref{eq:wiener_represent}, and 
$\eta$ is a time-homogeneous Poisson 
random measure with intensity $\nu$, 
the solution $U$ is given $\PP$-almost surely 
by $U = F(\bW, \eta, U_0)$. Moreover, due to pathwise 
uniqueness and Lemma \ref{lem:transequal}, if the triplet 
$(\bW, \eta, U_0)$ is defined on any 
other filtered probability space satisfying 
the constraints in Lemma \ref{lem:transequal}, then the solution 
$U$ (given by $F(\bW, \eta, U_0)$) on the specified 
probability space is unique. Hence, we conclude that 
there exists a unique strong solution.

\begin{remark}
A consequence of 
Theorem \ref{theoremuniquness} is
that if $\bigl(\MA^{(i)},U^{(i)},\bW^{(i)},
\eta^{(i)},U_0^{(i)}\bigr)$,
where $\MA^{(i)}=\bigl(\Omega^{(i)},\CF^{(i)},
\BF^{(i)},\PP^{(i)}\bigr)$, $\BF^{(i)}=(\CF^{(i)}_t)_{t\in\INT}$, 
$i=1,2$, are two solutions to the SPDE \eqref{SPDE0} 
in the sense of Definition \ref{def_solution} 
and Assumption \ref{hyp_solution}, such that $\bW^{(i)}$ 
are cylindrical Wiener processes evolving over 
$\WienH$, $\nu$ is the intensity measure 
of $\eta^{(i)}$, and $\rho_0$ is the 
law of $U_0^{(i)}$, $i=1,2$, then 
we have uniqueness in law:
$$
\Law\left(U^{(1)},\bW^{(1)},\eta^{(1)}\right)
\quad \text{coincides with} \quad
\Law\left(U^{(2)},\bW^{(2)},\eta^{(2)}\right).
$$
\end{remark}

\appendix

\section{The Skorohod space}

For an introduction to the Skorokhod space, 
we refer the reader to 
\cite{Billingsley:1999aa,Ethier:1986aa,Jacod:2003aa}. 
In this section, we recall only a few definitions  
that are essential for our work.
Let $(Y, |\cdot|_Y)$ be a separable Banach space. 
The space $\DD(0,T; Y)$ denotes the set 
of all right-continuous functions $x: [0,T] \to Y$ 
with left-hand limits. Let $\Lambda$ denote the 
class of all strictly increasing continuous functions 
$\lambda:[0,T] \to [0,T]$ such that 
$\lambda(0) = 0$ and $\lambda(T) = T$. 
Clearly, any $\lambda \in \Lambda$ is a 
homeomorphism of $[0,T]$ onto itself. 
We now define the Skorohod topolgy. 
First, let
$$
\norm{\lambda}_{\log}
:=\sup_{\substack{t,s \in [0,T] \\ t \neq s}} 
\left| \log \left( \frac{\lambda(t) 
- \lambda(s)}{t-s} \right) \right|
\sim 
\operatorname*{ess\,sup}_{t \in [0,T]}
\bigl| \log\left( \lambda'(t) \right) \bigr|.
$$
Introducing the function class
$$
\Lambda_{\log} := \left\{ \lambda \in \Lambda:
\norm{\lambda}_{\log} < \infty \right\},
$$
the 
 metric $d_0$ between 
$x$ and $y$ in $\DD(0,T;Y)$ 
is given by
$$
d_0(x, y) := \inf_{\lambda \in \Lambda_{\log}}
\left\{ \norm{\lambda}_{\log}
\wedge \sup_{t \in [0,T]} 
\abs{x(t) - y(\lambda(t))}_Y \right\}.
$$
The metric $d_0$ generates the Skorohod topology 
on $\DD([0,T];Y)$. Moreover, the space $\DD([0,T];Y)$, 
equipped with the metric $d_0$, is 
a complete and separable metric space.

The metric $d_0$ is particularly useful for comparing 
discontinuous functions because it enables small 
adjustments in the time axis to better align 
their discontinuities. This flexibility ensures 
that even if two functions exhibit similar behavior 
but with slight variations in the timing of their jumps, 
the $d_0$ metric accurately reflects their closeness, 
while the $L^\infty$ metric does not.


\bibliographystyle{abbrv}

\end{document}